\renewcommand{\(}{\left(}
\renewcommand{\)}{\right)}
\newtheorem{thm}{Theorem}
\newtheorem{prop}{Proposition}
\newtheorem{lem}{Lemma}
\newtheorem{cor}{Corollary\!\!}
\newtheorem{ncor}{Corollary}
\newtheorem{conj}{Conjecture}
\theoremstyle{definition}
\newtheorem{df}{Definition}
\newtheorem{ex}{Example}
\theoremstyle{remark}
\newtheorem{rem}{Remark\!\!}
\newtheorem{rems}{Remarks\!\!}
\newtheorem{nrem}{Remark}
\newcommand{\Ord}[1]{{\mathcal O}\left(#1\right)}  
\newcommand{\rcp}[1]{\frac{1}{#1}} 
\newcommand{\E}{{\mathbb E}}
\newcommand{\bth}{\begin{theo}} 
\newcommand{\eth}{\end{theo}} 
\newcommand{\bl}{\begin{lem}} 
\newcommand{\el}{\end{lem}} 
\newcommand{\bp}{\begin{prop}} 
\newcommand{\ep}{\end{prop}} 
\newcommand{\bdf}{\begin{df}} 
\newcommand{\edf}{\end{df}} 
\newcommand{\brem}{\begin{rem}} 
\newcommand{\erem}{\end{rem}} 
\newcommand{\brems}{\begin{rems}} 
\newcommand{\erems}{\end{rems}} 
\newcommand{\bnrem}{\begin{nrem}} 
\newcommand{\enrem}{\end{nrem}} 
\newcommand{\bex}{\begin{ex}} 
\newcommand{\eex}{\end{ex}} 
\newcommand{\bcor}{\begin{cor}} 
\newcommand{\ecor}{\end{cor}} 
\newcommand{\bncor}{\begin{ncor}} 
\newcommand{\encor}{\end{ncor}} 
\newcommand{\bpf}{\begin{proof}} 
\newcommand{\epf}{\end{proof}}
\newcommand{\nti}{{n\to\infty}}
\newcommand{\dt}{\,\mathrm{d}t}
\newcommand{\dx}{\,\mathrm{d}x}
\newcommand{\dy}{\,\mathrm{d}y}
\newcommand{\dz}{\,\mathrm{d}z}
\newcommand{\dv}{\,\mathrm{d}v}
 \DeclareMathOperator*{\Set}{\mbox{\sc Set}}
 \newcommand{\T}{\mathcal{T}} 
 \newcommand{\Z}{\mathcal{Z}} 
\begin{document}

\title{Compaction for two models of logarithmic-depth trees:
Analysis and Experiments} 
\thanks{This work
 was partially supported by the \textsc{anr} project \textsc{Metaconc} ANR-15-CE40-0014,
 by the \textsc{phc} \#~39454SF, by the ÖAD grant FR04/2018 
 and by the Austrian Science Foundation (FWF), grant SFB F50-03.}

\author[O. Bodini]{Olivier Bodini}
\address{Olivier Bodini \and Mehdi Naima. Université Sorbonne Paris Nord, Laboratoire d'Informatique de Paris Nord, CNRS,
UMR 7030, F-93430, Villetaneuse, France.}
\email{\{Olivier.Bodini, Mehdi.Naima\}@lipn.univ-paris13.fr}

\author[A. Genitrini]{Antoine Genitrini}
\address{Antoine Genitrini. Sorbonne Universit\'e, CNRS,
Laboratoire d'Informatique de Paris 6 -LIP6- UMR 7606, F-75005 Paris, France.
}
\email{Antoine.Genitrini@lip6.fr}

\author[B. Gittenberger]{Bernhard Gittenberger}
\address{Bernhard Gittenberger and I. Larcher. Department of Discrete Mathematics and Geometry,
	Technische Universit\"at Wien, Wiedner Hauptstra\ss e 8-10/104, 1040 Wien, Austria.}
\email{\{Gittenberger, Larcher\}@dmg.tuwien.ac.at}

\author[I. Larcher]{Isabella Larcher}

\author[M. Naima]{Mehdi Naima}

\date{\today}

\begin{abstract} 
We are interested in the quantitative analysis of the compaction ratio for two classical families
of trees: recursive trees and plane binary increasing trees. These families are typical
representatives of tree models with a small depth. Once a tree of size $n$ is compacted by keeping
only one occurrence of all fringe subtrees appearing in the tree the resulting graph contains only
$O(n / \ln n)$ nodes.  This result must be compared to classical results of compaction in the
families of simply generated trees, where the analogous result states that the compacted structure
is of size of order $n / \sqrt{\ln n}$.  The result about the plane binary increasing trees has
already been proved, but we propose a new and generic approach to get the result.  Finally, an
experimental study is presented, based on a prototype implementation of compacted binary search
trees that are modeled by plane binary increasing trees.\\

\noindent\textsc{Keywords:} Analytic Combinatorics;
Tree compaction; Common subexpression recognition; Increasing trees; Binary search trees
\end{abstract}

\maketitle


\section{Introduction} 
	\label{sec:intro}

\begin{wrapfigure}[24]{r}{6.8cm}
    \begin{center}
    	\vspace*{-0.2cm}
        \includegraphics[scale=0.08]{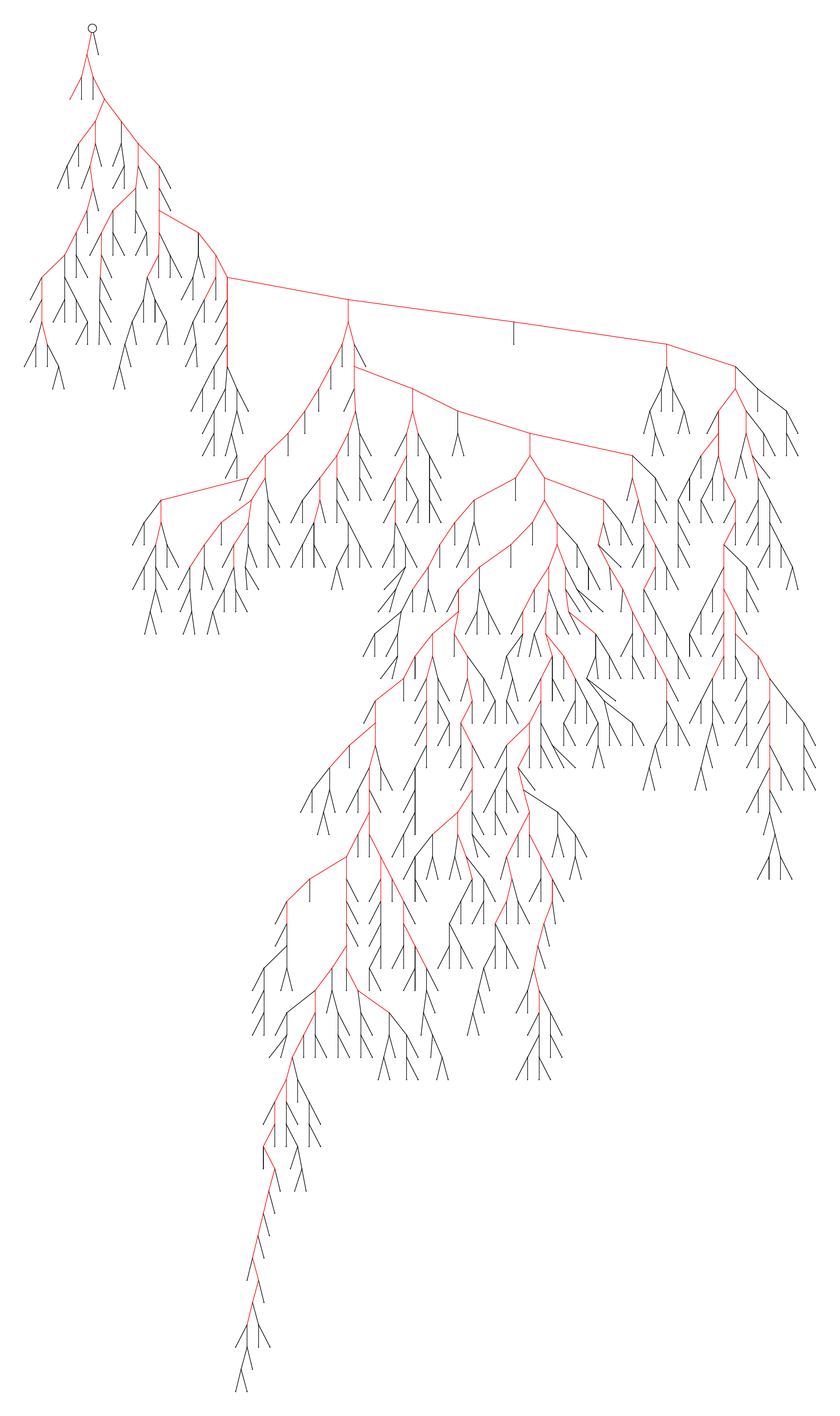}
        \caption{\label{fig:simpliTree}
            A uniformly sampled plane binary tree with 500 internal nodes:
            Black fringe subtrees are removed by the compaction process.
            The red part (that is what remains of the tree after pruning
the black fringe subtrees) is of size~250.}
    \end{center}
\end{wrapfigure}
Tree-shape data structures are omnipresent in computer science.
The syntax structure of a program is a tree, symbolic expressions in computer algebra systems
have a tree structure. Syntax trees arise in the context of parsing, XML data structures
are also built on trees. However in order to reduce redundancy in the storage, usually
an algorithmic step called the \emph{common subexpression recognition} is run to identify
identical fringe subtrees (\emph{i.e.} a node and all its descendants)
so that only one occurrence is stored and all other are replaced
by pointers to the first one. Thus the trees are then replaced by directed acyclic graphs.
In the context of tree compaction several studies attempt to quantitatively analyze the process of
compaction. We mention here in particular two important research lines about compaction properties.

The first line occurs in the context of information theory and data compression studies.
There researchers are interested in designing compression algorithms for advanced data structures. 
One of the main parameters of interest is the entropy of the data structure: it represents an
optimal lower bound on the average number of bits required to represent the data 
structure: see for example~\cite{cover2006elements} for an introduction to the subject.
For trees, the entropy of some models of plane trees\footnote{Plane trees are such that
the descendants of a node are ordered contrary to non-plane trees where the descendants
are seen as a set instead of a sequence of subtrees.} has been studied in particular in
\cite{matu2018,cisz17,golkebiewski2018entropy}. 

An analysis of a model of non-plane binary trees has been presented in~\cite{cisz17}. 
The authors focus on the number of symmetry nodes (internal nodes having two isomorphic subtrees
as children) and its relation with R\'enyi entropy. In all investigations of that kind, the
probability distribution used for the tree model is central. The aforementioned work~\cite{cisz17} 
is focusing on a growing tree model that is also seen as the classical binary search tree
distribution model. Likewise, it can be rephrased as the binary increasing tree model we will deal
with in Section~\ref{sec:plane}, as it was already pointed out in \cite{BFS92}. 
We are, however, interested in different aspects of these trees
(see below for more details).

The second line of research has been started by the seminal paper of Flajolet \emph{et al}~\cite{FSS90}.
In this paper the authors consider the compaction ratio
of classical binary trees compared with their corresponding compacted structures. They prove,
starting from a large binary tree of size $n$ (containing $n$ nodes) and then compacting it,
that the average size of the compacted result is $\alpha n / \sqrt{\ln n}$ with a computable constant $\alpha$.
In the end of the paper the authors finally state that their analysis is fully adapted
to all families of simply generated trees as defined by Meir and Moon in their fundamental
paper~\cite{MM78} and thus for all kinds of tree structures we mentioned above as examples, we get
the same kind of ratio for the compaction. In Figure~\ref{fig:simpliTree}
we have represented a uniformly sampled binary tree with 500 internal nodes. If we compact it
then all the fringe subtrees 
in black are removed and only the red structure is kept 
with addition of several pointers (that are not represented in the figure). The remaining red tree is of size 250.
We recall that in the context of simply generated trees of size $n$,
the typical depth is of order $\sqrt{n}$ (this is the case for the binary trees).
Bousquet-Mélou \emph{et al.}~\cite{BMLMN15} present the complete proof
for the compaction quantitative analysis of simply generated tree families and apply it experimentally
on XML-trees. Finally, in~\cite{RW15} the authors
are interested in the number of fringe subtrees with at least $r$ occurrences in a random simply generated tree.
This approach is an extension of the previous results where it was dealt
with subtrees appearing at least once (thus for $r=1$).\\

But there are also several other kinds of tree structures that cannot be modeled
through the concept of simply generated trees. In particular, we have in mind
all structures used for searching, and thus usually with a small depth of order $\ln n$
for a whole structure of size $n$. The classical binary search trees (\textsc{bst}), 
red-black trees or AVL trees belong to these families. 
But we can also point out priority heaps like binary or binomial heaps.
The reader can refer for example to Knuth's book~\cite{Knuth98} for details
about all these structures. In this context, all nodes contain different labels (or information)
and thus the compaction process as described before has no effect (no two subtrees are identical due to the labeling).
But, if we remove the labels from the nodes, then a tree structure remains 
whose typical depth is of order $\ln n$ for $n$ nodes. Hence we can compact the tree structure.
\begin{figure}[h]
	\includegraphics[width=0.9\textwidth]{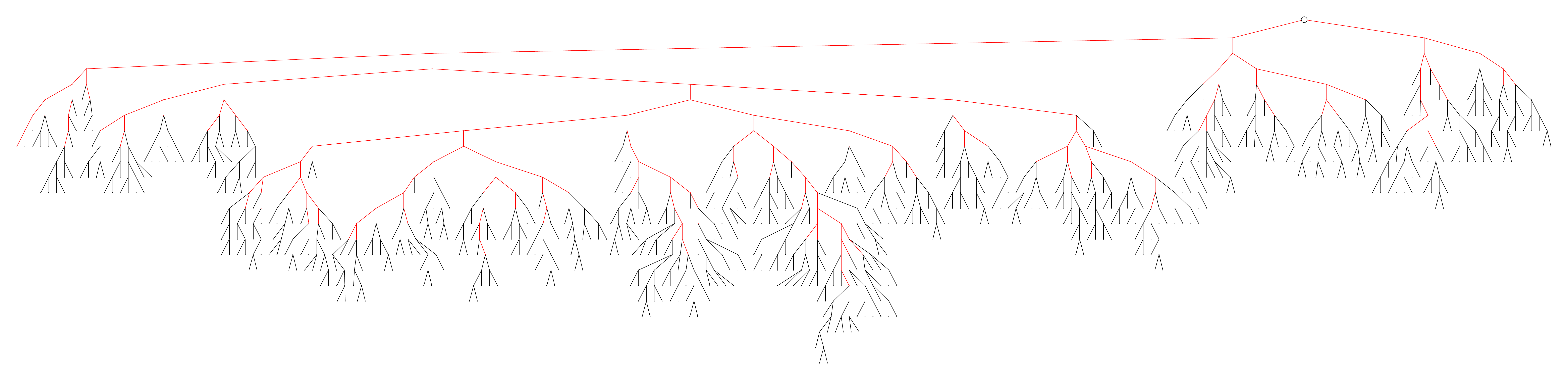}
    \caption{\label{fig:searchTree}
		A uniformly sampled (plane) binary search tree structure with 500 internal nodes: Black fringe subtrees are removed
		by the compaction. The red part is only of size~172.}
\end{figure}
In Figure~\ref{fig:searchTree} we have depicted a binary search tree structure of size 500.
Once the structure is compacted, it remains a tree with 172 nodes (represented in red).

Our study focuses on the number of non-isomorphic subtrees in a tree and this
corresponds also to the size of the compacted tree (also called minimal DAG
representation in \cite{zhang2013redundancy}). This parameter is different
from the study of symmetry nodes mentioned above (see~\cite{cisz17}), since there 
symmetries happen if an internal node has two isomorphic children (a local symmetry)
whereas the number of non-isomorphic subtrees of a tree is capturing a global symmetry. Using the 
results in \cite{cisz17} to design and analyze a data compression algorithm leads to constant
compression rate on average, as was already shown in \cite{FGM97}. In our case, we gain on average 
at least a logarithmic factor.  

For both investigations, a Riccati-like functional-differential equation must be analyzed. But,
not only the equations in \cite{cisz17} and in Section~\ref{sec:plane} are different, but the
global nature of the parameter of our interest is reflected by the need of uniform asymptotics,
which required a delicate singularity analysis.

In this paper, we analyze the underlying \emph{unlabeled tree structure} of
a plane and a non-plane model of increasingly labeled trees, namely
increasing binary trees and recursive trees. For these two models of trees picking
a tree uniformly at random and erasing the labels from it gives an unlabeled
plane binary tree or an unlabeled non-plane general tree (also called P\'olya tree).
However, for each model the probability distribution of the
resulting unlabeled tree is non-uniform. The distribution on plane binary trees we use
is the same as the one of \cite{cisz17,matu2018}. Even if the analyzed parameters
are not the same, for all such studies the mathematical tools are based on differential equation analyses 
due to the underlying distribution on trees.

Finally, another way to reach the non-uniform distribution is as a
very simple natural evolution process. First let us mention the
plane binary tree model: start with a single node, and at each step select
randomly one of the
leaves (external node) and replace with a binary node. While for P\'olya trees, 
start with a node and at each step select randomly one of the nodes and append a
new leaf to it.

We are interested in the analysis of the compaction ratio, relating the tree size
and its minimal DAG size as in \cite{zhang2013redundancy}
for two families of trees that are not simply generated trees. 
The first family consists of recursive trees (Section~\ref{sec:rec_trees}).
The family has been introduced by Moon~\cite{Moon74} and further studied by Meir and Moon in the 70s~\cite{MM78}.
Their motivation was to present a tree model for the spread of epidemics.
The second tree family we are interested in is the class of plane binary increasing tree (Section~\ref{sec:plane}).
It corresponds to the tree model for binary search trees. Both families have been extensively studied in the last
two decades with probabilistic methods~\cite{MS95,drmota03,BDMS08,DIMR09} as well as with
combinatorial ones~\cite{BFS92,KP07,PP07}.

For recursive trees and binary increasing trees, informally speaking we prove that, 
asymptotically, if a tree of size $n$ is compacted, then the resulting structure has on average 
size $\mathcal{O}\left(n/\ln n\right)$, with a lower bound of $\Omega(\sqrt n)$. 

In the context of binary increasing trees the result has already been derived. The upper bound
$\mathcal{O}\left(n/\ln n\right)$ was proved in~\cite{FGM97} as a specific result
in the context of patterns in random binary search trees. The proof is based on some bivariate
generating function analysis in the Analytic Combinatorics context.
The stronger $\Theta$-result has then been proved in~\cite{Devroye98}
based on a preliminary result in~\cite{Fill96}. These papers are based on probability theory
rather than Analytic Combinatorics. But recently other authors~\cite{BL18,BW20} presented new
proofs based on Analytic Combinatorics. We, however, decided to briefly present a further proof
based on Analytic Combinatorics, as it is generic in the following sense: the same approach is valid
for recursive trees as well as for binary increasing trees. 
Especially in order to derive our results, we analyze a perturbation of the differential equation 
defining the tree models, observing that analogous functions related to the increasing labeling
of the tree structure are central in both tree models. And under the
assumption that a certain experimentally supported conjecture is true, almost the same proof can
be used to improve the lower bound and get a $\Theta$-result for both classes.

We thus remark that such a kind of trees are compacted in a more efficient way (in the sense of the number
of remaining nodes) than simply generated trees.
Finally, we end the paper (Section~\ref{sec:data}) with a section dedicated to the compaction of binary search trees (\textsc{bst})
in practice, in order to exhibit the way we can compact the tree structure, but by keeping some extra information
we lose no information (about the labeling of the initial \textsc{bst}). An experimental study is provided by using a
prototype in \emph{python} for our new data structure, the \emph{compacted \textsc{bst}}. The experiments are very
encouraging for the development of such new compacted search tree structures.

So, as a synthesis, Section~\ref{sec:rec_trees} is dedicated to the compaction analysis of recursive trees.
Then Section~\ref{sec:plane} contains the key elements to derive the same result for binary increasing trees
and finally, Section~\ref{sec:data} presents an experimental approach to verify the latter result
in the context of data structures.

\brems 
We note that for all figures we present, we use a postorder traversal of the tree representation in
order to compact them. However, whatever traversal is chosen, the quantitative results are
always identical.

Recall that the size of the compacted tree also equals the number of distinct unlabeled fringe subtrees 
appearing in the original tree.
\erems


\section{Recursive trees}
	\label{sec:rec_trees}

The class of recursive trees has been studied by Meir and Moon~\cite{MM78}.
These trees are models in several contexts as e.g. for the study of epidemic spreads,
and thus many quantitative study have focused on this family. Some details are presented either
in~\cite{Drmota09} or in~\cite{flajolet2009analytic}.
Using the classical operators from Analytic Combinatorics, recursive trees can be specified by the so-called boxed product,
or Greene operator,
\[
	\T = \Z \;^{\square} \star \Set (\T),
\]
meaning that the structure of a recursive tree (in the class $\T$) is defined as a root $\Z$
attached to a set of recursive trees (the set may be empty, then $\Z$ is a leaf) and such that the whole structure is
canonically labeled (1, 2, \dots, up to the size). The box in the boxed product indicates that the lowest label goes into
the left component (the atom in this case). The atoms $\Z$ in the structure are therefore labeled increasingly on each path from the root
of the tree to any leaf. See~\cite[Section II.6.3]{flajolet2009analytic} for details
about the constraint labeling operators. The class of recursive trees is also presented in~\cite[Section 1.3]{Drmota09}.

\begin{figure}[h]
\begin{tabular}{c c c}
\resizebox{0.3\textwidth}{!}{
	\begin{tikzpicture}[node distance=15pt]
		\node (f0) {$1$};        
		\node[below of=f0,node distance=25pt] (p) {};   
		\node[right of=p,node distance=40pt] (e) {{$6$}};
		\draw (f0) -- (e);           
		\node[below of=e,node distance=25pt] (z) {$7$};   
		\draw (e) -- (z);  
		\node[below of=z,node distance=25pt] (ppp) {};   
		\node[left of=ppp,node distance=12pt] (zz) {$9$};
		\draw (z) -- (zz);           
		\node[right of=ppp,node distance=12pt] (zzz) {{$11$}};
		\draw (z) -- (zzz);
		\node[below of=zzz,node distance=25pt] (zzzz) {$17$};   
		\draw (zzz) -- (zzzz);   
		
		\node[left of=p,node distance=40pt] (f) {$2$};
		\draw (f0) -- (f);
		\node[below of=f, node distance=25pt] (p2) {};
		\node[right of=p2,node distance=25pt] (pp) {{$4$}};
		\draw (f) -- (pp);   
		\node[below of=pp,node distance=25pt] (ppp) {};   
		\node[left of=ppp,node distance=12pt] (c) {$5$};
		\draw (pp) -- (c);           
		\node[below of=c,node distance=25pt] (cc) {$16$};   
		\draw (c) -- (cc);   

		\node[right of=ppp,node distance=12pt] (e) {{$12$}};
		\draw (pp) -- (e);
		\node[left of=p2, node distance=25pt] (ppp2) {{$3$}};
		\draw (f) -- (ppp2);
		\node[below of=ppp2, node distance=25pt] (pp2) {};
		\node[right of=pp2, node distance=15pt] (pppp2) {{$13$}};
		\draw (ppp2) -- (pppp2);
		\node[left of=pp2, node distance=15pt] (p2) {$8$};
		\draw (ppp2) -- (p2);
		\node[below of=p2, node distance=25pt] (h) {$14$};
		\draw (p2) -- (h);
			
		\node[left of=h,node distance=25pt] (g) {$10$};
		\draw (p2) -- (g);
		\node[right of=h, node distance=25pt] (k) {$15$};
		\draw (p2) -- (k);
	\end{tikzpicture} 
}
&
\resizebox{0.3\textwidth}{!}{
	\begin{tikzpicture}[node distance=15pt, inner sep=0, outer sep=0]
		\node (f0) {};        
		\node[below of=f0,node distance=25pt] (p) {};   
		\node[right of=p,node distance=40pt] (e) {};
		\draw[red] (f0) -- (e);           
		\node[below of=e,node distance=25pt] (z) {};   
		\draw[black] (e) -- (z);  
		\node[below of=z,node distance=25pt] (ppp) {};   
		\node[left of=ppp,node distance=12pt] (zz) {};
		\draw[black] (z) -- (zz);           
		\node[right of=ppp,node distance=12pt] (zzz) {};
		\draw[black] (z) -- (zzz);
		\node[below of=zzz,node distance=25pt] (zzzz) {};   
		\draw[black] (zzz) -- (zzzz);

		\node[left of=p,node distance=40pt] (f) {};
		\draw[red] (f0) -- (f);
		\node[below of=f, node distance=25pt] (p2) {};
		\node[right of=p2,node distance=25pt] (pp) {};
		\draw[red] (f) -- (pp);   
		\node[below of=pp,node distance=25pt] (ppp) {};   
		\node[left of=ppp,node distance=12pt] (c) {};
		\draw[red] (pp) -- (c);           
		\node[below of=c,node distance=25pt] (cc) {};   
		\draw[black] (c) -- (cc); 

		\node[right of=ppp,node distance=12pt] (e) {};
		\draw[black] (pp) -- (e);
		\node[left of=p2, node distance=25pt] (ppp2) {};
		\draw[red] (f) -- (ppp2);
		\node[below of=ppp2, node distance=25pt] (pp2) {};
		\node[right of=pp2, node distance=15pt] (pppp2) {};
		\draw[black] (ppp2) -- (pppp2);
		\node[left of=pp2, node distance=15pt] (p2) {};
		\draw[red] (ppp2) -- (p2);
		\node[below of=p2, node distance=25pt] (h) {};
		\draw[black] (p2) -- (h);
				
		\node[left of=h,node distance=25pt] (g) {};
		\draw[red] (p2) -- (g);
		\node[right of=h, node distance=25pt] (k) {};
		\draw[black] (p2) -- (k);
	\end{tikzpicture}
} 
&
\resizebox{0.26\textwidth}{!}{
	\begin{tikzpicture}[node distance=15pt, inner sep=0, outer sep=0]
		\node (f0) {};        
		\node[below of=f0,node distance=25pt] (p) {};   
		\node[right of=p,node distance=40pt] (e) {};
		\draw[red] (f0) -- (e);           
		
		\node[left of=p,node distance=40pt] (f) {};
		\draw[red] (f0) -- (f);
		\node[below of=f, node distance=25pt] (p2) {};
		\node[right of=p2,node distance=25pt] (pp) {};
		\draw[red] (f) -- (pp);
		\node[below of=pp,node distance=25pt] (ppp) {};   
		\node[left of=ppp,node distance=12pt] (c) {};
		\draw[red] (pp) -- (c);   
		\draw[->,>=latex, black, dotted] (e) to[out=-90, in=30] (pp);  
		
		\node[below of=pp,node distance=25pt] (ppp) {};   
		
		\node[right of=ppp,node distance=12pt] (e) {};
		
		\node[left of=p2, node distance=25pt] (ppp2) {};
		\draw[red] (f) -- (ppp2);
		\node[below of=ppp2, node distance=25pt] (pp2) {};
		\node[right of=pp2, node distance=15pt] (pppp2) {};
		
		\node[left of=pp2, node distance=15pt] (p2) {};
		\draw[red] (ppp2) -- (p2);
		\node[below of=p2, node distance=25pt] (h) {};

		\node[left of=h,node distance=25pt] (g) {};
		\draw[red] (p2) -- (g);
		
		\draw[->,>=latex, black, dotted] (ppp2) to[out=-60, in=30]  (g);
		\draw[->,>=latex, black, dotted] (p2) to[out=-90, in=30] (g);
		\draw[->,>=latex, black, dotted] (c) to[out=-90, in=30] (g);       
		\draw[->,>=latex, black, dotted] (pp) to[out=-60, in=30] (g);
		
		\draw[->,>=latex, black, dotted] (p2) to[out=-60, in=30]  (g);
	\end{tikzpicture}  
}
\end{tabular}
    \caption{\label{fig:small_rec_tree}
		Example of a recursive tree of size $17$}
\end{figure}
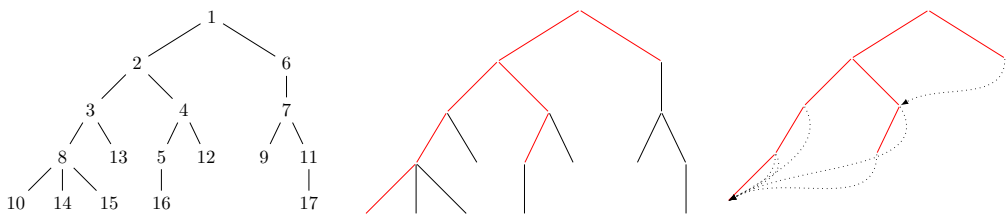
On the left side of Figure~\ref{fig:small_rec_tree} we have represented a recursive tree of size $17$.
The children of a node are put in lexicographic order of their root labels. We remark that the unlabeled structures
underlying the fringe subtrees rooted at $4$ and $7$ have the same unlabeled non-plane structure.
And obviously the leaves are also identical.
So, in the middle of the figure we represent with black edges the fringe subtrees
whose unlabeled non-plane structure has already been seen through a postorder traversal
of the leftmost tree. Finally, on the right side of the figure we replace the multiple occurrences
of a subtree by pointers to the first occurrence. 

\begin{figure}[h]
	\begin{tabular}{c c}
		\includegraphics[width=0.48\textwidth]{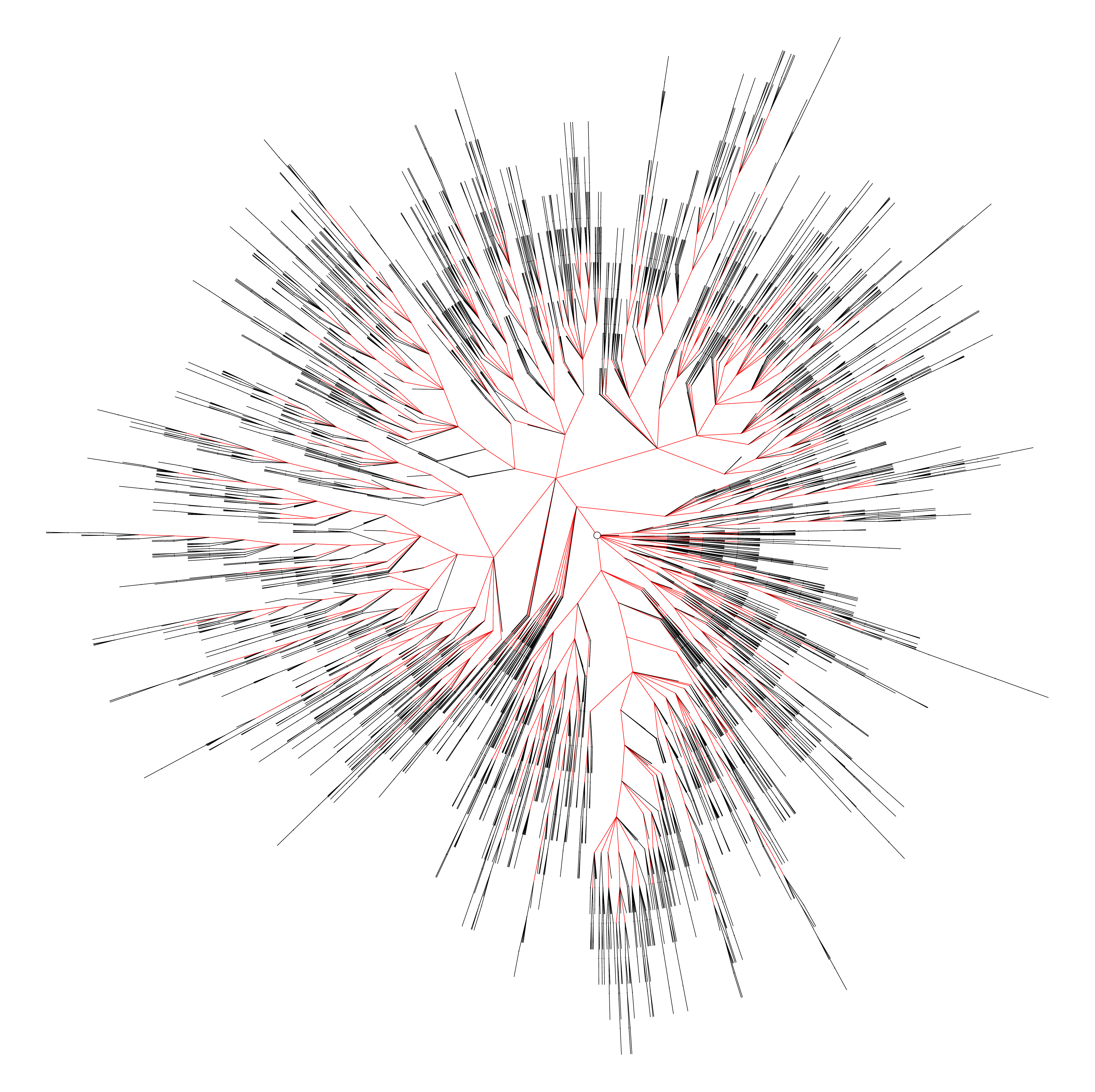}
	&
		\hspace*{-5mm}\includegraphics[width=0.48\textwidth]{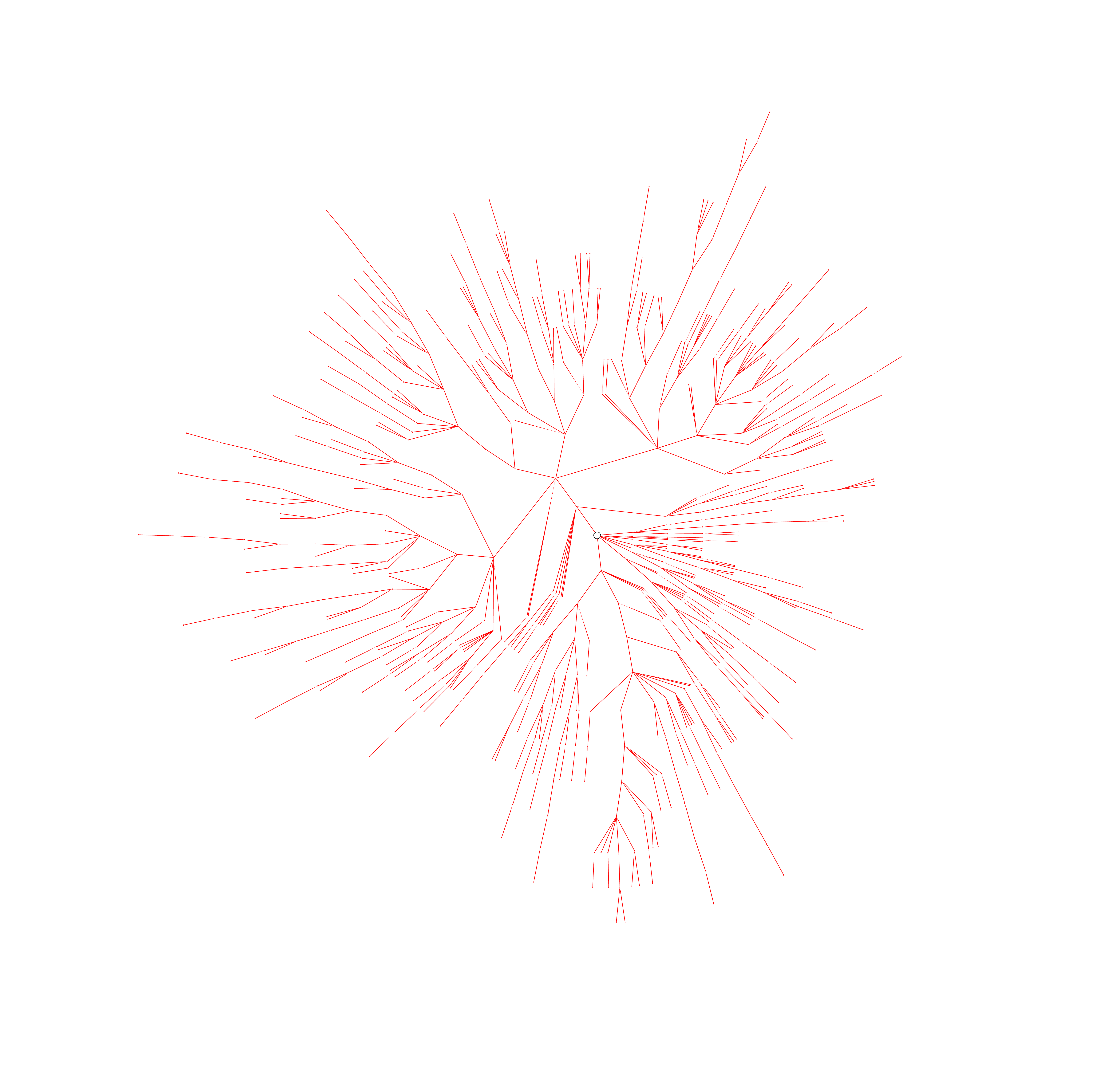}
	\end{tabular}
    \caption{\label{fig:rec_tree_compact}
		(left) A uniformly sampled non-plane recursive tree of size 5,000: Black fringe subtrees are removed
		by the compaction. (right) The red part is of size 663.}
\end{figure}
In Figure~\ref{fig:rec_tree_compact} we have represented a recursive tree structure
containing 5,000 nodes on the left side. It has been uniformly sampled among all trees with the same size.
The original root of the tree is represented using a small circle $\circ$. On the right side
we have depicted the nodes that are kept after the compaction of the latter tree. Only 663 nodes remain.

We define the exponential generating
function $T(z) = \sum_{n\geq 1} T_n \frac{z^n}{n!}$,
where $T_n$ corresponds to the number of trees containing $n$ nodes \emph{i.e.} of \emph{size} $n$.
Using the now classical \emph{symbolic method} from Analytic Combinatorics, from the latter unambiguous
specification we deduce the following functional equation satisfied by $T(z)$:
\[
	T(z) = \int_{0}^{z} \exp(T(v)) \dv.
\]
The unique power series solution satisfying $T(0)=0$ is
\[
	T(z)=\ln \frac{1}{1-z},
\]
whose dominant singularity is $\rho=1$. 
Finally, we get the value $T_n = (n-1)!$.

Let $\mathcal{T}_n$ be the class of recursive trees of size $n$; the size of a tree $\tau$ is defined
as the number of its nodes and is denoted by $|\tau|$. Let $X_n$ be the size of the compacted
tree corresponding to a random recursive tree $\tau$ of size $n$.
In other words, $X_n$ is the number of distinct fringe subtree shapes in $\tau$.
We define $\mathcal{P}$ as the set of P\'olya trees, \textit{i.e.}, non-plane unlabeled trees
such that the degrees of their nodes are arbitrary.
This class of trees is presented in detail in Drmota's book~\cite[Section 1.2.5]{Drmota09}.
and it corresponds to the possible shapes of the recursive trees,
once the increasing labeling has been removed.
We denote by $\mathcal{P}_{\leq n}$ the set of all P\'olya trees with size at most $n$.
Then we have
\begin{equation}\label{equ:sumexp1}
	\mathbb{E} \( X_n \) = \sum_{t \in \mathcal{P}_{\leq n}} \mathbb{P} ( t \ \text{occurs as subtree of} \ \tau )
		= \sum_{t \in \mathcal{P}_{\leq n}} 1 - \mathbb{P} ( t \ \text{does not occur as subtree of} \ \tau ).
\end{equation}
Recall that the tree $t$ corresponds to a tree shape, it is unlabeled,
while $\tau$ is a recursive tree and therefore is increasingly labeled.

\medskip
Now, for a given P\'olya tree $t\in \mathcal{P}$ let us consider a perturbed combinatorial class $\mathcal{S}_t$
that contains all recursive trees except for those that contain a $t$-shape as a (fringe) subtree.
The corresponding exponential generating function satisfies the differential equation
\begin{equation}\label{defors}
	S_t'(z) = \exp(S_t(z)) - P_t'(z),
\end{equation}
where $P_t(z) = \ell(t) \frac{z^{|t|}}{|t|!}$, with $\ell(t)$ denoting the number of ways to
increasingly label the tree shape~$t$.

So, using \eqref{equ:sumexp1} we obtain
\begin{align}
	\mathbb{E} \( X_n \) &= \sum_{t \in \mathcal{P}_{\leq n}}
		\( 1 - \mathbb{P} ( t \ \text{does not occur as shape of a fringe subtree of} \ \tau ) \) \nonumber\\
	&= \sum_{t \in \mathcal{P}_{\leq n}} \( 1 - \frac{[z^n]S_t(z)}{[z^n]T(z)} \). \label{equ:sumexp2}
\end{align}

Therefore, the problem is now essentially reduced to the analysis of the asymptotic behavior of
$[z^n]S_{t}(z)$.

Solving \eqref{defors} we obtain the exponential generating function
\begin{equation}\label{equ:St_solution}
	S_t(z)= \ln \( \frac{1}{1- \int_{0}^{z} \exp(-P_t(v)) \dv} \) - P_t(z).
\end{equation}
Thus, for the dominant singularity $\tilde{\rho}_t$ of $S_t(z)$, the following equation must hold:
\begin{equation}\label{equ:conditionrhotilde}
	\int_{0}^{\tilde{\rho}_t} \exp(-P_t(v)) \dv = 1.
\end{equation}
As $\exp(-P_t(v))<1$ for positive $v$, the dominant singularity $\tilde{\rho}_t$ is greater than 1.
Recall that $\rho$ denotes the dominant singularity of $T(z)$, thus $\rho=1$ and
therefore we write $\tilde{\rho}_t=\rho(1+\epsilon_t)=1+\epsilon_t$ with suitable $\epsilon_t>0$.

\subsubsection*{Notations}
Before we proceed, let us introduce some frequently used notations: For the size and the weight of a P\'olya tree $t$ we use
\[
k:=|t| \qquad\text{ and }\qquad w(t):=\frac{\ell(t)}{|t|!},
\]
respectively. Moreover, let
\[
G(z):=\int_0^z e^{-P_t(v)}\dv=\int_{0}^{z} e^{-w(t)v^{k}} \dv,
\]
if $z\ge 0$ and its complex continuation if $z$ is not a nonnegative real number. With this notation
\eqref{equ:conditionrhotilde} reads as $G(1+\epsilon_t)=1$. By expanding the integrand, we obtain
\[
G(z)=\sum_{\ell\geq 0}(-w(t))^\ell\frac{z^{\ell k+1}}{(\ell k+1)\cdot \ell!},
\]
which shows that $G(z)$ is an entire function.

\subsubsection*{How to proceed}
Taking a random recursive tree of size $n$,
we are interested in the asymptotic behavior of the size of the compacted tree
issued from the compaction of the recursive one.
In order to obtain bounds for this compacted size we proceed as follows:
First, in Lemma~\ref{lem:epsilon}, we compute a upper bound for $\tilde{\rho}_t$.

Then, in Proposition~\ref{lem:asympst}, we provide uniform asymptotics
for the $n$-th coefficient of the generating function $S_t(z)$ when $n$ tends to infinity,
thereby showing that the error term is sufficiently small for what is needed later on. 

The average size of a compacted tree corresponding to a random recursive tree
is expressed as a sum over the forbidden trees. Thereby, the two cases where the
size $k$ of the forbidden tree $t$ is smaller or larger than $\log n$
are treated in a different way: Upper bounds for the size of the compacted tree are derived
in Proposition~\ref{lem:estimate_sum1} (small trees) and
Proposition~\ref{lem:estimate_sum3} (large trees). Finally, Proposition~\ref{thm:recursive_omega},
gives a (crude) lower bound for the size of the compacted tree.

\begin{lem}\label{lem:epsilon}
	Let $S_t(z)$ be the generating function of the perturbed combinatorial class
	(\emph{cf.} Equation~\eqref{defors}) of recursive trees that do not contain a subtree of
	shape $t$ and $\tilde{\rho}_t$ be the dominant singularity of $S_t(z)$ (\emph{cf.}
	Equation~\eqref{equ:conditionrhotilde}). Furthermore, let $k=|t|$ and $w(t)=\ell(t) /k!$ where
	$\ell(t)$ denotes the number of possible increasing labelings of the P\'olya tree $t$. Then
	\[
		\tilde{\rho}_t = 1 + \epsilon_t < 1+\frac{2w(t)}k.
	\]
\end{lem}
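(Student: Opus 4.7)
The plan is to turn the defining equation $G(1+\epsilon) = 1$ into an implicit inequality for $\epsilon$, and then verify that the value $2w(t)/k$ lies strictly above the corresponding root. Starting from Equation~\eqref{equ:conditionrhotilde} and using $\int_{0}^{1+\epsilon} 1\,\dv = 1+\epsilon$, one obtains
\[
  \epsilon \;=\; \int_{0}^{1+\epsilon}\bigl(1 - e^{-w(t)\, v^{k}}\bigr)\,\dv.
\]
Applying the elementary inequality $1-e^{-x}\le x$ for $x\ge 0$ gives the implicit bound
\[
  \epsilon \;\le\; \int_{0}^{1+\epsilon} w(t)\,v^{k}\,\dv \;=\; \frac{w(t)\,(1+\epsilon)^{k+1}}{k+1}.
\]

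Consider then the auxiliary function $\psi(\epsilon) := \epsilon - \frac{w(t)(1+\epsilon)^{k+1}}{k+1}$. It is concave (since $\psi'' < 0$), satisfies $\psi(0) = -\frac{w(t)}{k+1} < 0$, and has positive slope at the origin because $\psi'(0) = 1-w(t) > 0$, which uses the universal bound $w(t)\le 1/k \le 1/2$ for $k\ge 2$ (immediate from $\sum_{|t|=k}\ell(t)=(k-1)!$). Hence $\psi$ admits at most two real zeros $\epsilon_{-}\le\epsilon_{+}$; a continuity argument in $w(t)$, tracking $\tilde{\epsilon}$ from $w(t)=0$ where $\tilde{\epsilon}=0$, identifies the relevant root with the smaller one $\epsilon_{-}$. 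It therefore suffices to check that $\psi(2w(t)/k) > 0$, which after rearrangement amounts to
\[
  \Bigl(1 + \tfrac{2w(t)}{k}\Bigr)^{k+1} \;<\; \frac{2(k+1)}{k}.
\]

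The main obstacle is establishing this final inequality uniformly in $k$ and $w(t)$. For sufficiently large $k$ it follows painlessly by combining $w(t)\le 1/k$ with the standard estimate $(1+x)^{n}\le e^{nx}$. For the smallest values of $k$ the straightforward exponential bound becomes too weak, and a refinement is needed: either a direct case-by-case inspection, or a sharper truncated expansion of $e^{-x}$, such as $1-e^{-x}\le x - \tfrac{x^2}{2} + \tfrac{x^3}{6}$, used already in the very first step to tighten the implicit inequality. I expect this last technical adjustment to be the most delicate part of the argument.
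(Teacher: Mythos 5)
Your route is genuinely different from the paper's: the paper works directly with the monotone function $G$ and shows $G\bigl(1+\tfrac{2w(t)}{k}\bigr)>1=G(\tilde\rho)$ by lower-bounding the increment $G\bigl(1+\tfrac{2w(t)}{k}\bigr)-G(1)\ge\tfrac{2w(t)}{k}e^{-w(t)(1+2w(t)/k)^k}$ against the upper bound $G(1+\epsilon)-G(1)\le\tfrac{w(t)}{k+1}$, which avoids any discussion of multiple roots. You instead convert $G(1+\epsilon)=1$ to an implicit inequality via $1-e^{-x}\le x$ and reason about the zeros of a concave auxiliary $\psi$. That is a legitimate alternative, but as written it has two problems.

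First, the weaker one: the ``continuity in $w(t)$'' argument to identify $\epsilon$ with the smaller zero $\epsilon_-$ of $\psi$ is only sketched. Since $\psi(\epsilon)\le 0$ is compatible with $\epsilon\ge\epsilon_+$, you do need to rule out that branch; this can be done (e.g.\ by producing a crude a priori bound showing $\epsilon$ is small, or by noting monotone dependence on $w$), but it is not free, whereas the paper's monotonicity-of-$G$ argument gets it for nothing.

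Second, and this is a real gap: the ``final inequality'' you reduce to, namely $\bigl(1+\tfrac{2w(t)}{k}\bigr)^{k+1}<\tfrac{2(k+1)}{k}$, is in fact \emph{false} for $k=2$. There $w(t)=\tfrac12$, giving $\bigl(\tfrac32\bigr)^3=3.375$ on the left but $\tfrac{2\cdot 3}{2}=3$ on the right. So the linearization $1-e^{-x}\le x$ loses too much precisely in the smallest case, and your proposed fallback (a higher-order bound on $1-e^{-x}$, or a direct check) is not optional colour but the crux: you would need to redo the reduction with the sharper bound, which changes $\psi$ and the target inequality, and then re-verify the root-location argument. The paper handles this by isolating $k=2$ and checking $\int_1^{3/2}e^{-v^2/2}\,\mathrm{d}v>1/6$ by hand, keeping the clean estimate for $k\ge 3$. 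As it stands your proof does not close; you should either adopt the paper's split $k=2$ versus $k\ge 3$ within your framework, or carry the cubic truncation of $e^{-x}$ all the way through the $\psi$-analysis.
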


\bpf
First observe that the number of increasing labelings of the P\'olya tree $t$ is
 bounded by $(k-1)!$, which gives the very crude bound $w(t) \leq 1/k$ which is valid
 for any tree $t$.

Next, as $\tilde{\rho}_t$ satisfies $G(1+\epsilon_t)=1$, it suffices to show the inequality
$G\(1+\frac{2w(t)}k\)>G(1+\epsilon_t)$. We show the equivalent inequality $G\(1+\frac{2w(t)}k\)-G(1)>G(1+\epsilon_t)-G(1)$:
If $k=2$, then $t$ is a path of length one and therefore $w(t)=1/2$. This gives explicitly
$\int_1^{3/2} e^{-v^2/2}\dv> 1/6$ which is easily verified.\\
If $k\ge 3$, then we have the lower bound
\begin{align*}
	G\(1+\frac{2w(t)}k\)-G(1) &\ge \frac{2w(t)}k \exp\(-w(t)\(1+\frac{2w(t)}k\)^k\) \\
		&\ge \frac{2w(t)}k \exp\(-w(t)\(1+\frac{2}{k^2}\)^k\),
\end{align*}
because $w(t)\le 1/k$.
Then for $k\ge 3$ we have $\(1+\frac2{k^2}\)^k<2$ 
and again, since $w(t)\le 1/k$, we obtain $2e^{-2/3}>1$ and thus
\[
	G\(1+\frac{2w(t)}k\)-G(1) \ge \frac{w(t)}k\cdot 2e^{-2w(t)} > \frac{w(t)}{k+1}.
\]
On the other hand, we have 
\[
	G(1+\epsilon_t)-G(1)= 1-\int_0^1 e^{-w(t) v^k}\dv \le 1- \int_0^1 (1-w(t)v^k)\dv=\frac{w(t)}{k+1},
\]
which implies the assertion. 
\epf

With a similar reasoning as in the above proof a lower bound for $\tilde{\rho}_t$ can be shown: 

\bncor
\label{cor:rho_lower_bound}
With the notations of Lemma~\ref{lem:epsilon} we have the following estimate:
\[
	\tilde{\rho}_t > 1+\frac{w(t)}{k+1}.
\]
\encor

\bncor
\label{cor:asympt}
With the notations of Lemma~\ref{lem:epsilon} we have the following asymptotic relation:
\[
	\tilde{\rho}_t = 1 + \epsilon_t \sim 1+ \frac{w(t)}k, \text{ as } k\to \infty.
\]
\encor

\bpf
Write $G(z)$ as $G(z)=z+R(z)$ with
\begin{equation} \label{R_fctn}
	R(z)=\sum_{\ell\geq 1}(-w(t))^\ell\frac{z^{\ell k+1}}{(\ell k+1)\cdot \ell!}
\end{equation}
As $\tilde{\rho}_t=1+\epsilon_t$ is the smallest positive solution of $G(z)=1$, 
it is the smallest positive zero of $z-1+R(z)$.
From Lemma~\ref{lem:epsilon} we know that $\epsilon_t=\Ord{1/k^2}$ and thus
$\tilde{\rho}_t^k\sim 1$, as $k$ tends to infinity, and
$R(\tilde{\rho}_t)=w(t)\tilde{\rho}_t^{k+1}/(k+1)+\Ord{1/k^3}$. This implies
\[
	\epsilon_t \sim \frac{w(t)}{k+1} \tilde{\rho}_t^{k+1} \sim \frac{w(t)}k,
\]
as desired.
\epf

\brem
In the paper~\cite{GWW16}, which is related to pattern exclusion in recursive trees, 
the same result about the singularity $\tilde{\rho}_t$ is proved.
Using more terms of the expansion of $G(z)$, it is possible to derive a more accurate
asymptotic expression for $\epsilon_t$ (in principle up to arbitrary order).
As an example, we state
\[
	\tilde{\rho}_t = 1+ \frac{w(t)}{k+1} + \frac{w(t)^2 (3k+1)}{(k+1)(4k+2)} +
		\frac{w(t)^3 (29k^3+32k^2+10k+1)}{6(k+1)^2(2k+1)(3k+1)} + \Ord{\frac{w(t)^4}{k}}.
\]
\erem

Note that in the sequel we will have to evaluate the coefficient $[z^n]S_t(z)$ for $n$ tending to
infinity and $|t|$ tending to infinity with $n$ as well. Thus a standard transfer lemma in the
sense of Flajolet and Odlyzko~\cite{flajolet1990singularity} is not sufficient. We need a tight
and uniform error term. In order to find this, we need to know where the second dominant
singularity is, or rather where we can be sure that there will not be any singularity. The next
lemma provides information about an eventually large enough singularity-free region. 

\bl\label{lem:nosingul}
Let $S_t(z)$ be the generating function of the perturbed class of recursive trees
defined in~\eqref{equ:St_solution}. Then $S_t(z)$ has no singularity in the domain 
\[
	\tilde\rho_t<|z|<1+ \frac{\ln (1/w(t))+\ln\ln\ln(1/w(t))}k.
\]
\el

\bpf
Recall that by~\eqref{equ:St_solution} we have
\begin{equation*}
	S_t(z)= \ln \left( \frac{1}{1-G(z)} \right) - P_t(z).
\end{equation*}
Since $G(z)$ is an entire function, the singularities of $S_t(z)$ are exactly the zeros of $G(z)-1$.
Therefore, consider $z_0$ such that $G(z_0)=1$ and write $G(z)=z+R(z)$ with $R(z)$ as in
\eqref{R_fctn}. Then the chosen number $z_0$ must satisfy the inequality 
\begin{align}
	|R(z_0)|\le \frac{|z_0|}{k+1} \sum_{\ell\ge 1}\frac{|w(t)|^\ell |z_0|^{k \ell+1}}{\ell!} 
		< \frac 1k (e^{|w(t)| |z_0|^k} -1).  \label{R_estim}
\end{align}

The first step is to show that $G(z)-1$ does not have any zeros (except $\tilde\rho_t$) 
in a sufficiently large domain, \emph{i.e.} that either $z_0=\tilde\rho_t$ or $|z_0|$ is large. 
We have to approach this in two steps. 

{\bf Case 1:} Assume first that $|z_0|\le 1+\frac{\alpha\ln(1/w(t))}k$ for some $\alpha<1$. 
As the dominant singularity of $S_t(z)$ is $\tilde{\rho}_t$ and $\tilde{\rho}_t>1$,
we must have $|z_0|>1$. Thus, the upper bound on $|z_0|$ implies $|z_0|^k\le
\exp\(\alpha\ln(1/w(t))\)=(1/w(t))^\alpha=o(1/w(t))$ and by \eqref{R_fctn} we obtain then 
\begin{equation} \label{dist_from_1}
1-z_0 = R(z_0)\sim -\frac{w(t)}k z_0^k =o\(\rcp{k}\).
\end{equation}
This implies further that $z_0^k\sim 1$, hence 
$z_0$ is asymptotically equal to a $k$-th root of unity. But then $z_0=\tilde \rho_t$, because
the distance between the other $k$-th roots of unity and 1 
is greater than $1/k$, which contradicts \eqref{dist_from_1}.


{\bf Case 2:} Now, assume that $|z_0|=1+\eta$ with $\alpha \ln(1/w(t))/k < \eta \le
(\ln(1/w(t))+\ln\ln\ln(1/w(t))-\delta)/k$ for some arbitrary but small $\delta>0$. Then 
$w(t) |z_0|^k \le \ln\ln(1/w(t)) e^{-\delta}$ 
and so by \eqref{R_estim} we have then 
\begin{equation} \label{R_est}
	|R(z_0)|\le \frac{\(\ln\rcp{w(t)}\)^{e^{-\delta}}-1}k.
\end{equation}  
But we assumed $|z_0-1|>\alpha\ln(1/w(t))/k$ and so $R(z_0)$ would be too small to
compensate the value of $z_0-1$. Indeed, we observe that in this region
\begin{equation} \label{G_est}
	|G(z)-1|> \rcp{k} +\frac{\alpha\ln\rcp{w(t)}-\(\ln\rcp{w(t)}\)^{e^{-\delta}}}k \ge
		\frac{\gamma\ln\rcp{w(t)}}k
\end{equation} 
holds, where $\gamma$ is a suitable positive constant.

Summarizing what we have so far, we infer that either $z_0=\tilde{\rho}_t$ or 
\[
	|z_0|\ge 1+ \frac{\ln (1/w(t))+\ln\ln\ln(1/w(t))}k,
\]
as claimed. 
\epf

Now we are able to derive a uniform asymptotic expression for the coefficients of $S_t(z)$ with a
sufficiently small error term.
\begin{prop}\label{lem:asympst}
Let $S_t(z)$ be the generating function of the perturbed class of recursive trees
defined in~\eqref{equ:St_solution} and fix a constant $L>2$. Then, uniformly for $D\le |t|\le n$
with $D$ independent of $n$ and sufficiently large, 
the following asymptotic relations hold, depending of the magnitude of $w(t)$:  
\begin{itemize}
\item If $\ln\rcp{w(t)}=o\(\sqrt k\,\)$, then 
the coefficients of $S_t(z)$ behave asymptotically as follows:  
	\[
		[z^n]S_t(z) = \frac{\tilde{\rho}_t^{-n}}n 
		\(1+\Ord{\frac1{\sqrt k} \(\frac{kcw(t)}{\ln\ln \rcp{w(t)}}\)^{n/k}}\),
		\text{ as } \nti,
	\]
where $c$ is an arbitrary constant satisfying $c>1$. 
\item If $\ln\rcp{w(t)}=\Omega\(\sqrt k\,\)$ and $\ln\rcp{w(t)}\le L k$, then 
	\[
		[z^n]S_t(z) = \frac{\tilde{\rho}_t^{-n}}n
                \(1+\Ord{\exp\(\frac nk\cdot \frac{\ln(L+1)}L 
		\ln(kw(t))\)}\), \text{ as } \nti.
	\]
\item If $\ln\rcp{w(t)}>Lk$, then 
	\[
		[z^n]S_t(z) = \frac{\tilde{\rho}_t^{-n}}n
                \(1+\Ord{ \ln (k) \exp\(-n \(\ln\(\ln\rcp{w(t)}-\ln k\)-\ln k\)\)}\), 
		\text{ as } \nti.
	\]
\end{itemize}
\end{prop}

\begin{proof}
Notice that $G'(\tilde{\rho}_t)=\exp\(-w(t)\tilde{\rho}_t^k\)\neq 0$ and therefore
$\tilde{\rho}_t$ is a simple zero of $G(z)-1$.
Thus $G(z)-1=(z-\tilde{\rho}_t)\tilde G(z)$ where $\tilde G(z)$ is analytic in the considered 
domain and does not have any zeros there.
Thus,
\[
	S_t(z)= \ln \left( \frac{1}{1-G(z)} \right) - P_t(z)
		=-\ln\(1-\frac{z}{\tilde{\rho}_t}\)-\ln(\tilde{\rho}_t \tilde G(z)) -P_t(z),
\]
where, apart from the first summand, there are no singularities in 
$|z|< 1+ \frac{\ln(1/w(t))+\ln\ln\ln(1/w(t))}k$ (see Lemma~\ref{lem:nosingul}). 
Expanding the logarithm gives
\[
[z^n]S_t(z)=\frac{\tilde{\rho}_t^{-n}}n\(1+\Ord{n\tilde{\rho}_t^n[z^n]\ln\tilde G(z)}\)
\]
and we want to estimate $[z^n]\ln\tilde G(z)$ using Cauchy's estimate. 
%
%
%
Therefore we use the integration contour 
\[
\Gamma:=\left\{z:\; |z|=1+\frac{\ln \rcp{w(t)}+\ln\ln\ln\rcp{w(t)}-\delta}k\right\}
\] 
for some small $\delta>0$, which we split into a part $\Gamma_1$
where $|z-1|\le 5\ln (1/w(t))/k$ and its complement $\Gamma_2$.

As we want to estimate the logarithm of $\tilde G(z)= (G(z)-1)/(z-\tilde{\rho}_t)$, we need an upper
and a lower bound for $\tilde G(z)$.

First of all, note that on the whole integration contour certain useful inequalities hold,
provided that $k$ is sufficiently large:
\begin{align*}
|z-\tilde{\rho}_t|&\ge |z-1|-|1-\tilde{\rho}_t|\ge |z-1|-\frac{2w(t)}{k}\ge 
|z-1|\left(1-\frac{2w(t)}{\ln \rcp{w(t)}}\right) \ge \frac{|z-1|}2,  \\
|z-\tilde{\rho}_t|&\le |z-1|+|\tilde{\rho}_t-1|\le |z-1|+\frac{2w(t)}{k}\le 
|z-1|\left(1+\frac{2w(t)}{\ln \rcp{w(t)}}\right) \le 2|z-1|,
\end{align*}
which is true, because $|1-\tilde{\rho}_t|<2w(t)/k$ due to Lemma~\ref{lem:epsilon} and
$\ln(1/w(t))/k\le |z-1|$. For $z\in \Gamma_1$ the upper bound can be slightly improved:
Indeed, we even have $|z-\tilde{\rho}_t|\le |z-1|$. Moreover, recall the inequality
\[
	|R(z)|\le \frac{\alpha \ln\rcp{w(t)} - 1}k,      
\]
which follows from \eqref{R_est}. On $\Gamma_1$ we also have 
\begin{equation} \label{z_rho}
	\frac{\ln (1/w(t))}{2k}\le |z-\tilde{\rho}_t|\le \frac{5\ln (1/w(t))}{k}.
\end{equation}

From all these inequalities we infer a universal upper bound 
(for all $z\in \Gamma_1\cup \Gamma_2$) for $\tilde G(z)$:
\[
	\left|\frac{G(z)-1}{z-\tilde{\rho}_t}\right|\le
		\frac{|z-1|}{|z-\tilde{\rho}_t|}+\frac{|R(z)|}{|z-\tilde{\rho}_t|}\le 2+\frac{2(e-1)}{\ln (1/w(t))}\le 3.
\]
Here we used that the first inequality in \eqref{z_rho} actually holds
on the whole integration contour. Using \eqref{G_est} and the second inequality in \eqref{z_rho}
we get for $z\in\Gamma_1$ the lower bound
\[
	\left|\frac{G(z)-1}{z-\tilde{\rho}_t}\right|\ge \frac\gamma5 >\rcp5.
\]
These two bounds and the fact that the length of the curve $\Gamma_1$ is less than
$10\ln(1/w(t))/k$ imply
\begin{equation*}
	\left|[z^n]\ln\tilde G(z)\right|\le \(1+\frac{\ln \rcp{w(t)}+\ln\ln\ln\rcp{w(t)}-\delta}k\)^{-n}
		\frac{10\ln \(\rcp{w(t)}\) \ln 5}{k} + \frac1{2\pi}\int_{\Gamma_2} \frac{|\ln\tilde G(z)|}{|z|^{n+1}} |\dz|.
\end{equation*}

Turning to $\Gamma_2$, we obtain the lower bound
\[
	\left|\frac{G(z)-1}{z-\tilde{\rho}_t}\right|\ge
		\frac{|z-1|}{|z-\tilde{\rho}_t|} - \frac{|R(z)|}{|z-\tilde{\rho}_t|} \ge \frac12 -
		\frac{\alpha\ln\rcp{w(t)}-1}k
		\frac{k}{5\ln \rcp{w(t)}} \ge \frac1{10},
\]
and so $|\ln\tilde G(z)|$ is bounded on $\Gamma_2$.

Finally, let $M:=\max(\ln(10),10\ln(1/w(t))\ln(10)/k)$. Altogether the above estimates show 
that for sufficiently large $k$ we have
\begin{align}
	n\tilde{\rho}_t^n |[z^n]\ln\tilde G(z)|&\le
			n\tilde{\rho}_t^n \(1+\frac{\ln
\rcp{w(t)}+\ln\ln\ln\rcp{w(t)}-\delta}k\)^{-n} \(\frac{10\ln\(\rcp{w(t)}\)\ln 5}k+\ln(10)M\)
\nonumber \\
& \le n \(1+\frac{\ln 
\rcp{w(t)}+\ln\ln\ln\rcp{w(t)}-2\delta}k\)^{-n} \(\frac{10\ln\(\rcp{w(t)}\)\ln 5}k+\ln(10)M\) 
\nonumber \\
&=\Ord{n\(1+\frac{\ln \frac1{w(t)}-\ln k+\ln\ln\ln\rcp{w(t)}-2\delta}k+\frac{\ln
n}n\)^{-k\cdot\frac nk}
\cdot\frac{\ln\rcp{w(t)}}k} \label{last} \\
&=\Ord{\frac{\ln\rcp{w(t)}}k \(\frac{w(t)ke^{2\delta}}{\ln\ln \rcp{w(t)}}\)^{n/k}},
\nonumber 
\end{align}
where the last step is only true in the case where $\ln(1/w(t))=o\(\sqrt k\,\)$ 
and yields the desired result after all.

In all the other cases, only the last step is different.
Indeed, going back to \eqref{last}, we can estimate $\ln\ln\ln\rcp{w(t)}-2\delta>0$ and thus 
\[ 
	n\tilde{\rho}_t^n |[z^n]\ln\tilde G(z)|=\Ord{\frac{\ln\rcp{w(t)}}k (1+X)^{-n}},
\] 
with $X=(\ln(1/w(t))-\ln k)/k$. 

If $\ln(1/w(t))=\Omega\(\sqrt k\,\)$, but $\ln(1/w(t))\le L k$, we write $(1+X)^{-n}=\exp(-n\ln
(1+X))$ and get the final result by using $\ln(1+X)\ge X\ln(L+1)/L$, which is true for $0\le X\le
L$. The prefactor $\ln(1/w(t))/k$ is bounded by $L$ in the considered case. 

And finally, if $\ln(1/w(t))> L k$ (and so $X>L$), then simply use $\ln(1+X)>\ln X$. This
yields 
\[ 
(1+X)^{-n}\le \exp\(-n \(\ln\(\ln\rcp{w(t)}-\ln k\)-\ln k\)\). 
\]
As $w(t)\le (k-1)!$, we get $\ln(1/w(t))/k=\Ord{\ln k}$ and the proof is complete. 
\end{proof}

The uniform error term in Proposition~\ref{lem:asympst} allows us to derive a simple upper bound
for $k$ not too large. It turns out that the bound in Corollary~\ref{cor:rho_lower_bound} is
actually good enough to cover the error term from Proposition~\ref{lem:asympst}. 

\bncor \label{cor:coeff_lower_bound}
If $k$ is sufficiently large, then 
\[
[z^n]S_t(z) \le \frac1n \(1+\frac{w(t)}{k+1}\)^{-n},
\] 
as $n$ tends to infinity and $k=\Ord{\sqrt n}$.  
\encor

\bpf
We know from Proposition~\ref{lem:asympst} that $[z^n]S_t(z) = \tilde{\rho}_t^{-n} n^{-1} (1+r_n)$
with $r_n=o(1)$. Thus we must show that 
\[
r_n\le \tilde{\rho}_t^{n} \(1+\frac{w(t)}{k+1}\)^{-n}-1 = \(1+ \Ord{w(t)^2}{k}\)^n -1. 
\]
As $r_n$ tends to 0, the inequality is trivial if $nw(t)^2/k$ does not tend to 0, as in this case
the right-hand side grows exponentially. Otherwise we are left with having to show the estimate
$r_n=\Ord{nw(t)^2/k}$. Let us compare $nw(t)^2/k$ with the exponential part of the error term
given by Proposition~\ref{lem:asympst}. In the case where $w(t)$ is large ($\ln\rcp{w(t)}=o(\sqrt
k\,)$) this gives
\begin{align*} 
&\(\frac{kcw(t)}{\ln\ln \rcp{w(t)}}\)^{n/k} \frac k{nw(t)^2} \\ 
&\qquad = \exp\( \(-\frac nk +2\)\ln\rcp{w(t)}
+\(\frac nk +1\)\ln k -\frac nk \ln\ln\ln\rcp{w(t)} +\frac nk \ln c -\ln n\) \\
& \qquad \le \exp\(3\ln k -\frac nk \ln\ln\ln k +\frac nk \ln c -\ln n\), 
\end{align*} 
where the inequality holds because of $\ln\rcp{w(t)}\ge \ln k$. As our assumptions imply
$n/k\to\infty$ and so the dominant term in the exponent, $-\frac nk \ln\ln\ln k$, is negative, 
we obtain $r_n=o(nw(t)^2/k)$ as desired. 

In the case where $w(t)$ has intermediate size, the difference of the logarithms of the
exponential term in the error and of $nw(t)^2/k$ is equal to 
\[
\(-\frac nk \frac{\ln(L+1)}{L}+2\) \ln\rcp{w(t)}+ \(\frac nk \frac{\ln(L+1)}{L}+1\)\ln k -\ln n
\]
which is negative if $k=\Ord{\sqrt n}$.

Finally, if $w(t)$ is small, then the difference of the logarithms equals
\begin{align*}  
&-n\(\ln\(\ln\rcp{w(t)}-\ln k\)-\ln k\)+2\ln\rcp{w(t)} +\ln k-\ln n 
\\ 
&\qquad\le -n(\ln((L-1)k)-\ln
k)+2\ln\rcp{w(t)}+\ln k -\ln n \\
&\qquad\le -n\ln(L-1)+2\ln\rcp{w(t)}+\ln k -\ln n 
\end{align*} 
which is again negative if $k=\Ord{\sqrt n}$.
\epf

Within this section many logarithms that occur are with respect to the base
$\frac{1}{\sigma}\approx 2.9955765$,
where $\sigma \approx 0.3383218$ denotes the dominant singularity of the generating function of
P\'olya trees (\emph{cf.} \cite[Section~VII.5]{flajolet2009analytic}).
We thus use the notation $\log_{\frac1\sigma}$ for the logarithm with respect to base
$\frac1\sigma$.

Now we decompose the sum \eqref{equ:sumexp2} into
\begin{equation}\label{equ:threesums}
	\mathbb{E} \left( X_n \right) = \sum_{\substack{t \in \mathcal{P}_{\leq n}\\ k < \log_{\frac1\sigma} n}}
		 \left( 1 - \frac{[z^n]S_t(z)}{[z^n]T(z)} \right) +
		 \sum_{\substack{t \in \mathcal{P}_{\leq n}\\ k \geq  \log_{\frac1\sigma} n}} \left( 1 - \frac{[z^n]S_t(z)}{[z^n]T(z)} \right),
\end{equation}
and investigate the two sums individually, starting with the first one, whose summands are
probabilities and thus bounded by 1.
\begin{prop}\label{lem:estimate_sum1}
	The first sum in (\ref{equ:threesums}) behaves asymptotically as
	\[ 
	\sum_{\substack{t \in \mathcal{P}_{\leq n}\\ k < \log_{\frac1\sigma} n}} \( 1 - \frac{[z^n]S_t(z)}{[z^n]T(z)} \)
	\underset{n\rightarrow \infty}= \mathcal{O} \( \frac{n}{\sqrt{(\ln n)^3}} \).
	\]
\end{prop}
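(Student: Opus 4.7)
The strategy is exactly the one flagged in the sentence preceding the statement: every summand is of the form $1 - \mathbb{P}(\cdot)$, hence lies in $[0,1]$, so I would first bound each summand trivially by $1$. The sum is then at most the number of P\'olya tree shapes of size strictly less than $\log n$, i.e., $\sum_{k=1}^{\lfloor \log n\rfloor} P_k$, where $P_k$ denotes the number of P\'olya trees on exactly $k$ nodes. Note that the precise asymptotic for $[z^n] S_t(z)$ from Lemma~\ref{lem:asympst} is \emph{not} needed in this regime; it only becomes important for the complementary sum over $k \ge \log n$.

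Next, I would invoke the classical Otter asymptotic (see \cite[Section~VII.5]{flajolet2009analytic}):
\[
P_k \sim c\,\sigma^{-k}\,k^{-3/2}\quad\text{as } k\to\infty,
\]
where $\sigma\approx 0.338$ is the dominant singularity of the P\'olya tree generating function. This is precisely the constant referred to in the Remark above, whose role explains the choice of $\log$ as a logarithm to base $1/\sigma$.

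Since $(P_k)_k$ grows geometrically up to the polynomial factor $k^{-3/2}$, any partial sum $\sum_{k\le K} P_k$ is dominated, up to a constant, by its last term. Taking $K = \lfloor \log n\rfloor$ and using the key identity $\sigma^{-\log n} = n$ forced by the choice of base, we obtain
\[
\sum_{k=1}^{\lfloor \log n\rfloor} P_k = \Ord{\frac{\sigma^{-\log n}}{(\log n)^{3/2}}} = \Ord{\frac{n}{(\log n)^{3/2}}} = \Ord{\frac{n}{\sqrt{(\log n)^3}}},
\]
which is the claimed bound.

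I do not foresee any real obstacle in this step: the crude estimate by $1$ per summand, combined with a purely enumerative count of P\'olya shapes, is already sharp enough, and the whole argument hinges on matching the geometric factor $\sigma^{-k}$ against the logarithmic cutoff. The only point to handle with care is the convention on the base of $\log$ made in the Remark; it is exactly this convention that converts $\sigma^{-\log n}$ into $n$ and yields the announced order $n/\sqrt{(\log n)^3}$.
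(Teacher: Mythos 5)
Your proposal is correct and follows exactly the same route as the paper: bound each summand by $1$, reduce to counting P\'olya shapes of size below $\log n$, invoke Otter's asymptotic $P_k \sim c\,\sigma^{-k}k^{-3/2}$ so the geometric partial sum is dominated by its last term, and then use the base-$1/\sigma$ convention to turn $\sigma^{-\lfloor\log n\rfloor}$ into $\mathcal{O}(n)$. No discrepancies.
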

\begin{proof}
Remember that we have set $k:=|t|$. Furthermore, we denote by $P(z)$ the generating function of
P\'olya trees and by $\sigma$ its dominant singularity. Then
\begin{align*} 
\sum_{\substack{t \in \mathcal{P}_{\leq n}\\ k< \log_{\frac1\sigma} n}} 
\left( 1 - \frac{[z^n]S_t(z)}{[z^n]T(z)} \right)
& \leq \sum_{\substack{t \in \mathcal{P}_{\leq n}\\ k < \log_{\frac1\sigma} n}} 1 =
\sum_{k < \log_{\frac1\sigma} n} [z^k]P(z) 
\\
& \sim \frac{1}{1-\sigma} [z^{\lfloor \log_{\frac1\sigma} n \rfloor}]P(z) =
		\mathcal{O} \( \frac{\sigma^{- \lfloor \log_{\frac1\sigma} n \rfloor}}{\sqrt{(\log_{\frac1\sigma} n)^3}} \).
\end{align*} 
Since $\log_{\frac1\sigma} n$ has the base $1 / \sigma$, we estimate $\sigma^{- \lfloor
\log_{\frac1\sigma} n \rfloor} \leq n$,
which completes the proof.
\end{proof}

In order to analyze the second sum from \eqref{equ:threesums} we rely on counting arguments,
which were presented in \cite[Remark~4.2]{GWW16}. For the sake of self-containedness we restate the
counting arguments here. 
\begin{prop}\label{lem:estimate_sum3}
The second sum in \eqref{equ:threesums} behaves asymptotically as
	\[ 
		\sum_{\substack{t \in \mathcal{P}_{\leq n}\\ k \geq  \log_{\frac{1}{\sigma} n}}}
		\left( 1 - \frac{[z^n]S_t(z)}{[z^n]T(z)} \right)= 
		\mathcal{O} \( \frac{n}{ \log_{\frac1\sigma} n} \).
	\]
\end{prop}
\begin{proof}
Remember that we have set $k:=|t|$ and $k$ tends to infinity in this proof. We are interested in
$1 - \frac{[z^n]S_t(z)}{[z^n]T(z)}$, the
probability that a tree of size $n$ contains a fringe subtree of shape $t$. 
	
We start with a counting argument, allowing multiple counting, to construct a tree of size $n$
having a fringe subtree of shape $t$. Let $\nu$ denote the root label of $t$ in the tree of size
$n$. If several occurrences of $t$ do appear, we consider one of them. 

First suppose $k<n$. Then choose a tree of size $n-k$ to which $t$ will be attached. Recall that
the number of possible choices for that tree equals $(n-k-1)!$.  The number of ways to choose
the labels of $t$ is $\binom{n-\nu}{k-1}$, as $\nu$ is the smallest label in $t$ and $|t|=k$.
Once the labels for $t$ have been chosen, there are $\ell(t)$ possibilities to distribute them
over the vertices of $t$ in order to obtain a proper labeling. The initially chosen (and already
labeled) tree of size $n-k$ gets the remaining labels (that have not been chosen for $t$), which
replace the original label in an order-preserving way. Finally, there are $\nu-1$
possible parent nodes to which $t$ can be attached.

Putting all this together, we get the number of all recursive trees of size $n$ having $t$ as a
fringe subtree, but each counted as many times as there are occurrences of $t$. This is clearly an
upper bound. We obtain 
\begin{align}
	1 - \frac{[z^n]S_t(z)}{[z^n]T(z)} &
		\leq  \frac{(n-k-1)!}{(n-1)!} \sum_{\nu = 2}^{n-k+1} (\nu-1)\binom{n-\nu}{k-1} 
		\ell(t) \label{eq:multi} \\
	&= \frac{\ell(t)}{(k-1)!} \frac{n}{k(k+1)} = \frac{nw(t)}{k(k+1)}. \nonumber 
\end{align}

Now let $k=n$. This means that we are interested in the probability that a recursive tree has
shape $t$. In this case,
\[
	1 - \frac{[z^n]S_t(z)}{[z^n]T(z)} = \frac{\ell(t)}{(n-1)!} = nw(t).
\]	

Now we apply this to the sum we want to estimate. Recall that $\sum_{t\in\mathcal P_k} w(t)= 1/k$.
We get 
%
%
\begin{align*}
	\sum_{\substack{t \in \mathcal{P}_{\leq n}\\ k \geq \log_{\frac1\sigma} n}} 
	\left( 1 - \frac{[z^n]S_t(z)}{[z^n]T(z)} \right)
	& \leq n \sum_{t \in \mathcal{P}_{n}} w(t) 
		+\sum_{k \geq \log_{\frac1\sigma} n} \frac{n}{k+1} \sum_{t\in\mathcal{P}_k} w(t) \\
	& = 1+ \sum_{k \geq  \log_{\frac1\sigma} n} \frac n{k(k+1)} \\
	& = 1+ \sum_{k \geq  \log_{\frac1\sigma} n} n \left(\frac1k - \frac1{k+1}\right) 
	= \Theta\( \frac{n}{ \log_{\frac1\sigma} n} \) \qedhere
\end{align*} 
\end{proof}

\begin{thm}\label{thm:bounded_expectation}
Let $X_n$ be the size of the compacted tree corresponding to a
random recursive tree $\tau$ of size $n$. Then
\[
	\mathbb{E} \left( X_n \right) \underset{n\rightarrow \infty}= 
	\mathcal{O} \left( \frac{n}{\ln n} \right).
\]
\end{thm}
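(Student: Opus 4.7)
The proof is essentially an assembly of the two propositions that precede the theorem, so my plan is very short. I would start from the exact identity in equation~\eqref{equ:sumexp2},
\[
\mathbb{E}(X_n) = \sum_{t \in \mathcal{P}_{\leq n}} \left(1 - \frac{[z^n]S_t(z)}{[z^n]T(z)}\right),
\]
and split the summation according to the size $k=|t|$ of the forbidden tree, exactly as done in~\eqref{equ:threesums}: one piece for $k<\log n$ and one for $k\geq \log n$. Writing $\log$ in base $1/\sigma$ (with $\sigma$ the dominant singularity of the Pólya tree generating function), this split is the natural one because the two regimes require completely different estimates for $1-[z^n]S_t/[z^n]T$.

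For the small-$k$ piece, I would simply invoke Proposition~\ref{lem:estimate_sum1}, which already gives an $\mathcal{O}\!\left(n/\sqrt{(\log n)^{3}}\right)$ bound by bounding each summand crudely by $1$ and using the known subexponential refinement of the Pólya-tree counting sequence. For the large-$k$ piece, I would invoke Proposition~\ref{thm:thirdsum_theta}, whose bound is $\mathcal{O}(n/\log n)$ and which is obtained by applying the uniform asymptotic of Lemma~\ref{lem:asympst}, using Lemma~\ref{lem:epsilon} to replace $\epsilon$ by $w(t)/k$, and then evaluating the resulting double sum via $\sum_{|t|=k} w(t) = [z^k]T(z) = 1/k$.

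Adding the two bounds, the second dominates: $n/\sqrt{(\log n)^3} = o(n/\log n)$, so both terms contribute at most $\mathcal{O}(n/\log n)$ and the theorem follows. There is no real obstacle left at this stage; all of the analytic difficulty has been absorbed into Lemmas~\ref{lem:epsilon} and~\ref{lem:asympst} and into Propositions~\ref{lem:estimate_sum1} and~\ref{thm:thirdsum_theta}. The only minor care needed is to check that the $\mathcal{O}$-constants in Lemma~\ref{lem:asympst} are uniform in $t$ for $|t|\geq D$, which is exactly the uniformity statement proved there, so that the summation over $t$ in the large-$k$ regime is legitimate.
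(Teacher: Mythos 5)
Your proposal matches the paper's proof exactly: split the sum in~\eqref{equ:threesums} at $k=\log n$, invoke Proposition~\ref{lem:estimate_sum1} for the small-$k$ part and Proposition~\ref{thm:thirdsum_theta} for the large-$k$ part, and observe that $n/\sqrt{(\log n)^3}=o(n/\log n)$. The paper states this in one line; you have only unpacked what those two propositions already establish.
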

\begin{proof}
The result follows directly by combining the previous propositions.
\end{proof}

Finally, we prove a lower bound for the average size of the compacted tree
based on a random recursive tree of size $n$.

\begin{prop}\label{thm:recursive_omega}
Let $\mathcal{P}_{\leq n}$ denote the class of P\'olya trees of size at most $n$. Then
\[
	\sum_{\substack{t \in \mathcal{P}_{\leq n}\\ \log_{\frac1\sigma} n \le k \le \sqrt
	n}} \left( 1 - \frac{[z^n]S_t(z)}{[z^n]T(z)} \right)
	\underset{n\rightarrow \infty}=	\Omega \left( \sqrt{n} \right).
\]
\end{prop}

\begin{proof}
For the sake of simplified reading we will use the abbreviation $\sum_{t} := \sum_{t \in
\mathcal{P}_k}$ in this proof.

First, we use Corollary~\ref{cor:coeff_lower_bound} and the inequality $(1+x)^{-n} \leq \exp\(-n x
+ \frac{n x^2}{2}\)$ in order to estimate
\begin{align}
	A_n:=\sum_{k = \lfloor\log_{\frac1\sigma} n\rfloor}^{\lfloor \sqrt n \rfloor} 
	\sum_t \left(1-\frac{[z^n]S_t(z)}{[z^n]T(z)}\right) 
	&\ge 
	\sum_{k = \lfloor\log_{\frac1\sigma} n\rfloor}^{\lfloor \sqrt n \rfloor} 
	\sum_t \left( 1 - \(1+ \frac{w(t)}{k+1}\)^{-n}\right)
	\nonumber
	\\
	&\geq 
	\sum_{k = \lfloor\log_{\frac1\sigma} n\rfloor}^{\lfloor \sqrt n \rfloor}
	\sum_t \left(1- \exp\(-\frac{nw(t)}{k+1} +   
	\frac{nw(t)^2}{(k+1)^2}\)\right).   \label{equ:est_for_lower_bound}
\end{align} 
Since $x\mapsto 1- \exp\(-nx + \frac{n x^2}{2}\)$, $0 \le x\le 2$, is a concave nonnegative
function with a zero in the origin and $x=w(t)/(k+1)$ certainly falls in this range for all $t$,
we can estimate the inner sum in \eqref{equ:est_for_lower_bound}, which yields
\begin{align*} 
	A_n 
	&\ge 
	\sum_{k = \lfloor\log_{\frac1\sigma} n\rfloor}^{\lfloor \sqrt n \rfloor}
	\left(1- \exp\(-n \sum_t \frac{w(t)}{k+1} 
	+ n \(\sum_t \frac{w(t)}{k+1}\)^2\)\)	
\end{align*} 
As $\sum_t w(t)\le 1/k$, we get
\begin{align*}
	A_n
	&\geq
	\sum_{k = \lfloor\log_{\frac1\sigma} n\rfloor}^{\lfloor \sqrt n \rfloor}
	\(1- \exp\(-\frac{n}{(k+1)^2} + \Ord{\frac{n}{k^4}} \)\) \\
	& \underset{n\rightarrow\infty}\sim \int_{\log_{\frac1\sigma} n}^{\sqrt n} \(1-
	\exp\(-\frac{n}{x^2} + \Ord{\frac{n}{x^4}}\)\) \dx \\
	& = \sqrt{n} \int_{n^{-1/2} \log_{\frac1\sigma} n}^{1} \(1- \exp\(-\frac{1}{y^2} +
	\Ord{\frac{1}{ny^4}}\)\) \dy.
\end{align*}
Since the integral is convergent this gives a lower bound that is $\Theta(\sqrt{n})$.
\end{proof}

We strongly believe that the upper bound presented in Theorem~\ref{thm:bounded_expectation} is in
fact the actual order of magnitude. Unfortunately, we cannot prove this. It seems that a finer
knowledge on the distribution of the values of $w(t)$ is necessary. 

\begin{conj}\label{c1}
If $k\ge \log_{\rcp\sigma} n$, then $\sum_{t\in \mathcal P_k} w(t)^2=\Ord{1/n}$. 
\end{conj}

It is not easy to carry out experiments to support or disprove this conjecture. But for small
value of $n$ this works and they seem to confirm the conjecture. If it is true, then our
conjecture on the order of magnitude $\E(X_n)$ is true as well. 

\begin{thm}\label{thetabound}
If Conjecture~\ref{c1} is true, then 
\[
	\sum_{\substack{t \in \mathcal{P}_{\leq n}\\ \log_{\frac1\sigma} n \le k \le \sqrt
        n}} \left( 1 - \frac{[z^n]S_t(z)}{[z^n]T(z)} \right)
        \underset{n\rightarrow \infty}= \Omega \left( \frac n{\ln n} \right).
\]
Consequently, then $\E(X_n) = \Theta(n/\ln n)$. 
\end{thm}

\bpf
Let us again use the notation $\sum_{t} := \sum_{t \in \mathcal{P}_k}$. 
Then by Corollary~\ref{cor:coeff_lower_bound} we have
\begin{align*} 
	\sum_{k = \lfloor\log_{\frac1\sigma} n\rfloor}^{\lfloor \sqrt n \rfloor}
        A_n:=\sum_t \left(1-\frac{[z^n]S_t(z)}{[z^n]T(z)}\right)
        &\ge
        \sum_{k = \lfloor\log_{\frac1\sigma} n\rfloor}^{\lfloor \sqrt n \rfloor}
        \sum_t \left( 1 - \(1+ \frac{w(t)}{k+1}\)^{-n}\right).
\end{align*} 
The function $f(x)=1-(1+x)^{-n}$ is concave, monotonically increasing for $x\ge0$ and nonnegative
there. Moreover, $f(0)=0$. Thus $f(x)\ge xf'(x)$, since the slope at some $x_0>0$ is flatter than
the slope at $0$ and so the line $x\mapsto xf'(x_0)$ stays below the graph of $f$ at least until
$x=x_0$. This implies 
\[
	A_n\ge \sum_{k = \lfloor\log_{\frac1\sigma} n\rfloor}^{\lfloor \sqrt n \rfloor}
		\sum_t \frac{nw(t)}{k+1} \left( 1 + \frac{w(t)}{k+1}\)^{-n-1} 
	= \sum_{k = \lfloor\log_{\frac1\sigma} n\rfloor}^{\lfloor \sqrt n \rfloor} \frac{n}{k(k+1)} 
		\sum_t kw(t) \(1+\frac{w(t)}{k+1}\)^{-n-1}.
\]
Now observe that $\sum_t kw(t)=1$ and that $g(x)=1/(1+x)^{n+1}$ is a convex function. Thus the
last sum is a convex linear combination of values of $g(x)$ and so Jensen's inequality gives 
\[
	A_n\ge \sum_{k = \lfloor\log_{\frac1\sigma} n\rfloor}^{\lfloor \sqrt n \rfloor} \frac{n}{k(k+1)}
		\(1+\sum_t\frac k{k+1} w(t)^2\)^{-n-1}. 
\]
Under our assumption that Conjecture~\ref{c1} is true, this can be further transformed into 
\[
A_n\ge \sum_{k = \lfloor\log_{\frac1\sigma} n\rfloor}^{\lfloor \sqrt n \rfloor} \frac{n}{k(k+1)}
\(1+\Ord{\rcp n}\)^{-n-1} = \Theta\(\frac n{\ln n}\). \qedhere
\]
\epf

\section{Plane increasing binary trees}
	\label{sec:plane}

As already mentioned in the introduction, the main result of this section related to the size of
the compaction of a random binary increasing tree (or a random binary search tree) has already
been proved. But here we want to show that the methodology of the previous section is applicable
to other classes of increasing trees as well. Thus we aim at presenting a new proof of this known
result based on the same approach as the one we used for random
recursive trees. Thus, many proofs will only be sketched. 

\medskip
Plane binary increasing trees have a classical specification in the context
of Analytic Combinatorics, once again by using the Greene operator, or boxed product,
allowing to define increasing labeling constraint for decomposable objects.
Thus the specification of this class $\mathcal{T}$ is
\begin{equation}
\label{spec:bin_tree}
	\mathcal{T} = \Z \;^{\square} \star \( 1 +\mathcal{T} \)^2.
\end{equation}
This specification defines a tree to be rooted with an atom $\Z$
associated to a pair of elements that are either the empty element (representing no subtree)
or a subtree itself from the class $\mathcal{T}$.
Once again the operator $\cdot \;^{\square} \star \cdot$ ensures the fact that the smallest
available label must be used for the atom $\Z$.

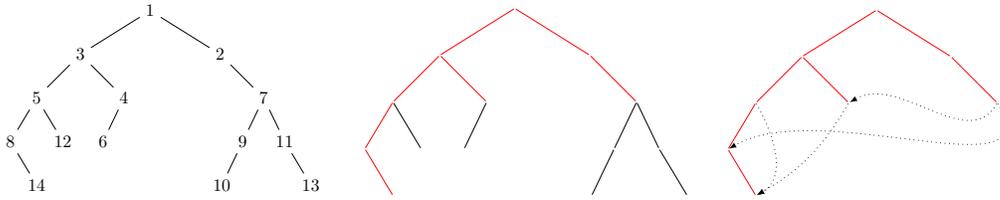
\begin{figure}[h]
\begin{tabular}{c c c}
\resizebox{0.3\textwidth}{!}{
	\begin{tikzpicture}[node distance=15pt]
		\node (f0) {$1$};        
		\node[below of=f0,node distance=25pt] (p) {};   
		\node[right of=p,node distance=40pt] (e) {{$2$}};
		\draw (f0) -- (e);           
		\node[below of=e,node distance=25pt] (z0) {};
		\node[right of=z0,node distance=25pt] (z) {$7$};      
		\draw (e) -- (z);  
		\node[below of=z,node distance=25pt] (ppp) {};   
		\node[left of=ppp,node distance=12pt] (zz) {$9$};
		\draw (z) -- (zz);           
		\node[below of=zz,node distance=25pt] (zzz) {};   
		\node[left of=zzz,node distance=12pt] (zzzz) {$10$};
		\draw (zz) -- (zzzz);           

		\node[right of=ppp,node distance=12pt] (zzz) {$11$};
		\draw (z) -- (zzz);
		\node[below of=zzz, node distance=25pt] (h) {};
		\node[right of=h, node distance=15pt] (k) {$13$};
		\draw (zzz) -- (k);

		\node[left of=p,node distance=40pt] (f) {$3$};
		\draw (f0) -- (f);
		\node[below of=f, node distance=25pt] (p2) {};
		\node[right of=p2,node distance=25pt] (pp) {{$4$}};
		\draw (f) -- (pp);   
		\node[below of=pp,node distance=25pt] (ppp) {};   
		\node[left of=ppp,node distance=12pt] (c) {$6$};
		\draw (pp) -- (c);           
		
		\node[left of=p2, node distance=25pt] (ppp2) {$5$};
		\draw (f) -- (ppp2);
		\node[below of=ppp2, node distance=25pt] (pp2) {};
		\node[right of=pp2, node distance=15pt] (pppp2) {$12$};
		\draw (ppp2) -- (pppp2);
		\node[left of=pp2, node distance=15pt] (p2) {$8$};
		\draw (ppp2) -- (p2);
		\node[below of=p2, node distance=25pt] (h) {};
		\node[right of=h, node distance=15pt] (k) {$14$};
		\draw (p2) -- (k);
	\end{tikzpicture} 
}
&
\resizebox{0.3\textwidth}{!}{
	\begin{tikzpicture}[node distance=15pt, inner sep=0, outer sep=0]
		\node (f0) {};        
		\node[below of=f0,node distance=25pt] (p) {};   
		\node[right of=p,node distance=40pt] (e) {};
		\draw[red] (f0) -- (e);           
		\node[below of=e,node distance=25pt] (z0) {};
		\node[right of=z0,node distance=25pt] (z) {};      
		\draw[red] (e) -- (z);  
		\node[below of=z,node distance=25pt] (ppp) {};   
		\node[left of=ppp,node distance=12pt] (zz) {};
		\draw[black] (z) -- (zz);           
		\node[below of=zz,node distance=25pt] (zzz) {};   
		\node[left of=zzz,node distance=12pt] (zzzz) {};
		\draw[black] (zz) -- (zzzz);           

		\node[right of=ppp,node distance=12pt] (zzz) {};
		\draw[black] (z) -- (zzz);
		\node[below of=zzz, node distance=25pt] (h) {};
		\node[right of=h, node distance=15pt] (k) {};
		\draw[black] (zzz) -- (k);

		\node[left of=p,node distance=40pt] (f) {};
		\draw[red] (f0) -- (f);
		\node[below of=f, node distance=25pt] (p2) {};
		\node[right of=p2,node distance=25pt] (pp) {};
		\draw[red] (f) -- (pp);   
		\node[below of=pp,node distance=25pt] (ppp) {};   
		\node[left of=ppp,node distance=12pt] (c) {};
		\draw[black] (pp) -- (c);           
		
		\node[left of=p2, node distance=25pt] (ppp2) {};
		\draw[red] (f) -- (ppp2);
		\node[below of=ppp2, node distance=25pt] (pp2) {};
		\node[right of=pp2, node distance=15pt] (pppp2) {};
		\draw[black] (ppp2) -- (pppp2);
		\node[left of=pp2, node distance=15pt] (p2) {};
		\draw[red] (ppp2) -- (p2);
		\node[below of=p2, node distance=25pt] (h) {};
		\node[right of=h, node distance=15pt] (k) {};
		\draw[red] (p2) -- (k);
	\end{tikzpicture} 
} 
&
\resizebox{0.3\textwidth}{!}{
	\begin{tikzpicture}[node distance=15pt, inner sep=0, outer sep=0]
		\node (f0) {};        
		\node[below of=f0,node distance=25pt] (p) {};   
		\node[right of=p,node distance=40pt] (e) {};
		\draw[red] (f0) -- (e);           
		\node[below of=e,node distance=25pt] (z0) {};
		\node[right of=z0,node distance=25pt] (z) {};      
		\draw[red] (e) -- (z);  
		
		\node[left of=p,node distance=40pt] (f) {};
		\draw[red] (f0) -- (f);
		\node[below of=f, node distance=25pt] (p2) {};
		\node[right of=p2,node distance=25pt] (pp) {};
		\draw[red] (f) -- (pp);   
		\node[below of=pp,node distance=25pt] (ppp) {};   
		
		\node[left of=p2, node distance=25pt] (ppp2) {};
		\draw[red] (f) -- (ppp2);
		\node[below of=ppp2, node distance=25pt] (pp2) {};
	
	    \node[left of=pp2, node distance=15pt] (p2) {};
		\draw[red] (ppp2) -- (p2);
		\node[below of=p2, node distance=25pt] (h) {};
		\node[right of=h, node distance=15pt] (k) {};
		\draw[red] (p2) -- (k);

  	    \draw[->,>=latex, black, dotted] (ppp2) to[out=-60, in=30] (k);
		\draw[->,>=latex, black, dotted] (pp) to[out=-120, in=30] (k);   

		\draw[->,>=latex, black, dotted] (z) to[out=-120, in=30] (pp);        
		\draw[->,>=latex, black, dotted] (z) to[out=-60, in=30]  (p2);
	\end{tikzpicture} 
}
\end{tabular}
\caption{\label{fig:bin_tree}
		Example of a plane increasing binary tree of size $14$}
\end{figure}
On the left side of Figure~\ref{fig:bin_tree} we present an example of a plane increasing binary tree.
Note that the internal nodes have a left child or a right child or both children.
In particular, the unlabeled subtree rooted at $8$ is the same as the one rooted
at $11$, but they are not the same as the one rooted either at $4$ or at $9$.
The two other structures in the right of the figure are the compacted version of the plane increasing binary tree.
In~\cite[Section 1.3.3]{Drmota09} Drmota exhibits the link between plane increasing binary trees
and binary search trees.

\begin{figure}[h]
	\begin{tabular}{c c}
		\includegraphics[width=0.47\textwidth]{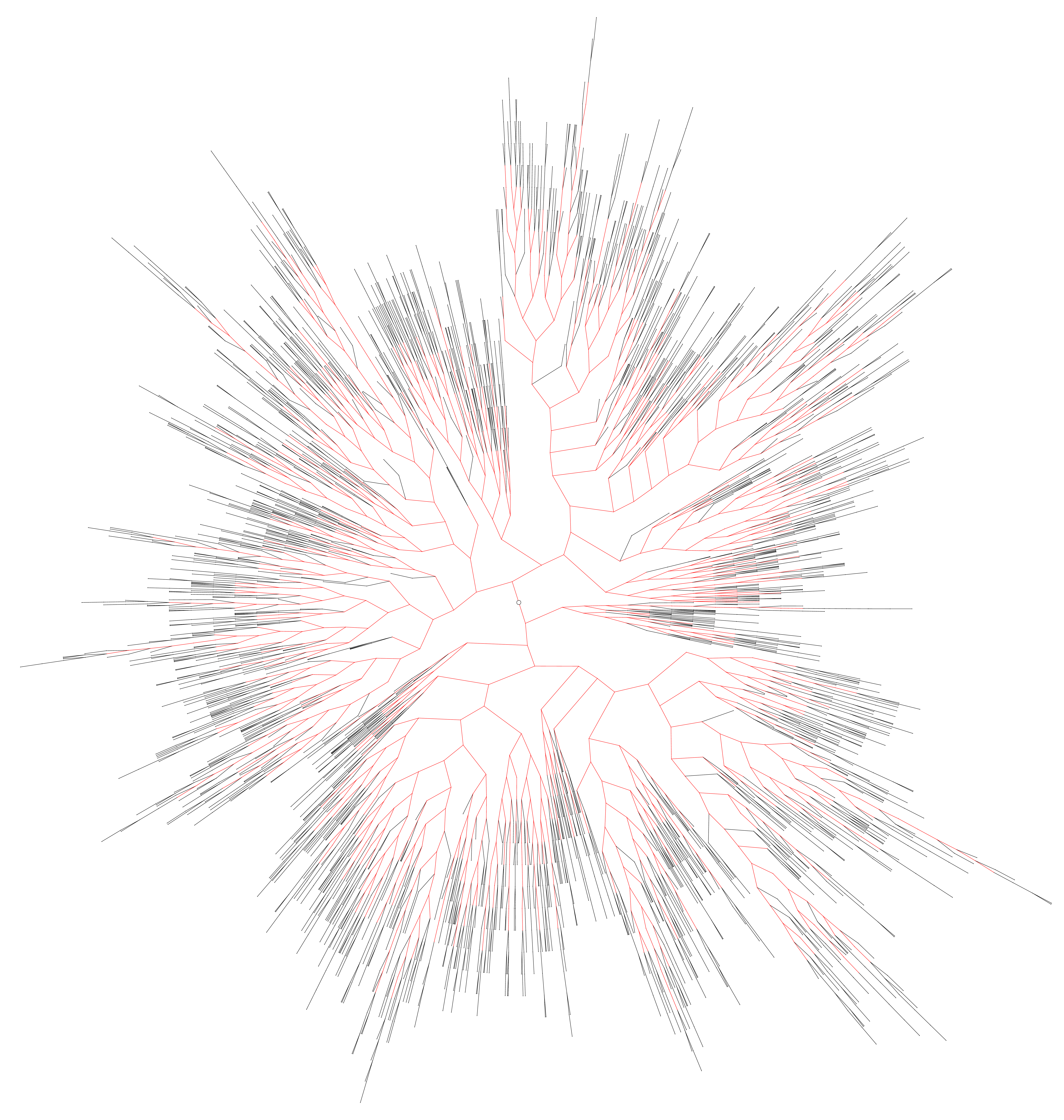}
	&
		\includegraphics[width=0.47\textwidth]{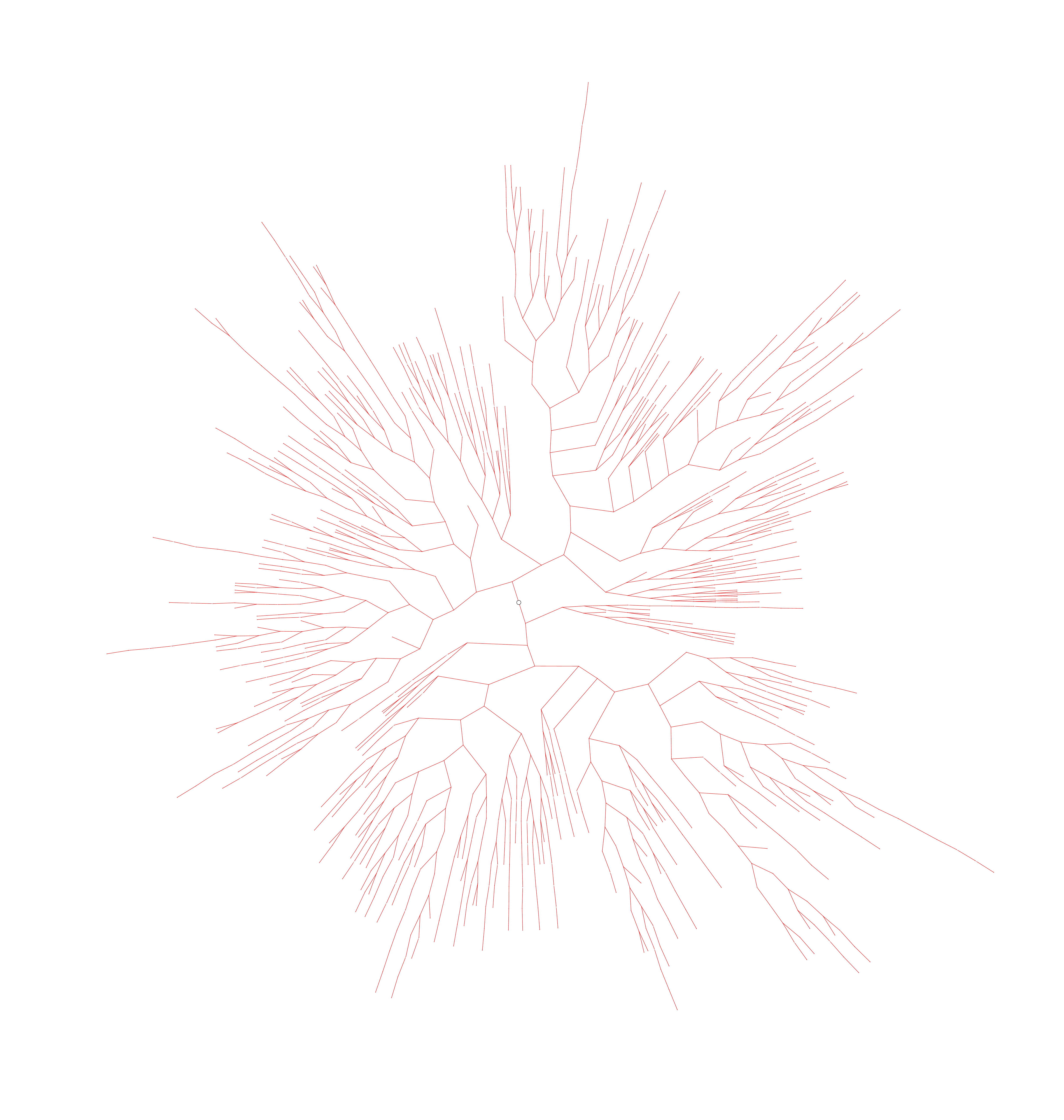}
	\end{tabular}	
    \caption{\label{fig:bin_tree_compact}
		(left) A uniformly sampled (plane) increasing binary tree of size 5,000: Black fringe subtrees are removed
		by the compaction. (right) The red part is of size 1,361.}
\end{figure}
In Figure~\ref{fig:bin_tree_compact} we have represented on the left side a plane increasing binary tree structure
containing $5,000$ nodes. It has been uniformly sampled among all trees with the same size.
The original root of the tree is represented using a small circle $\circ$. On the right side,
we have depicted the nodes that are kept after the compaction of the latter tree. Only $1,361$ nodes
remain.

By using the symbolic method~\cite{flajolet2009analytic}, the latter specification~\eqref{spec:bin_tree} translates as
\[
	T(z) = \int_{0}^{z} \left( 1+T(v)\right)^2 \dv,
\]
in terms of $T(z)$ the exponential generating function for $\mathcal{T}$.
We can also rewrite it as a differential equation
\[
	T'(z)=\( 1+ T(z) \)^2, \qquad \text{with } T(0)=0.
\]
The equation can be solved such that
\[
	T(z)=\frac{z}{1-z},
\]
with the dominant singularity $\rho=1$.

The exponential generating function $S_t(z)$ of the perturbed class of plane increasing binary trees 
that do not contain the tree shape $t$ (where $t$ is a non-labeled binary tree)
as a fringe subtree, fulfills the equation
\begin{equation}
\label{equ:ODE_St}
	S_t'(z)=(1+S_t(z))^2-P_t'(z) \qquad \qquad \text{with } S_t(0)=0
\end{equation}
where $P_t(z)=\frac{\ell(t)z^{|t|}}{|t|!}$ and $\ell(t)$ denotes the number of ways to increasingly label
the plane binary tree $t$. The quantity $\ell(t)$ is also called the hook length of $t$ and 
it is well known that $\ell(t)$ equals $|t|!$ divided by the product of the sizes of
all fringe subtrees of $t$ (\emph{cf.} e.g.~\cite[p.67]{Knuth98} or~\cite{BGP16}). 
We first start with a lemma establishing an upper bound for the normalized hook length.

\begin{lem}
\label{lem:hook}
	Let $t$ be a binary tree of size $k$. By defining the weight of the tree $t$ as
	$w(t):=\frac{\ell(t)}{k!}$, where $\ell(t)$ denotes the hook length of $t$, we have
	\[
		w(t) \leq \frac{1}{2^{k-2}}.
	\]
\end{lem}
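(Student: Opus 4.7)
The plan is to reformulate the claim via the hook length formula, derive a simple multiplicative recurrence for $w(t)$, and close by strong induction on $k=|t|$.

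The first step is to observe that, by the hook length formula recalled just above the lemma, $w(t)=\ell(t)/k!=1/\prod_{v\in t}|t_v|$, so the inequality $w(t)\le 2^{2-k}$ is equivalent to $\prod_{v\in t}|t_v|\ge 2^{k-2}$. Decomposing $t$ at its root into its left and right subtrees $t_L, t_R$ of sizes $k_L, k_R$ (possibly $0$, with $k_L+k_R=k-1$) and adopting the convention $w(\emptyset)=1$, either the hook length formula directly or the standard identity $\ell(t)=\binom{k-1}{k_L}\ell(t_L)\ell(t_R)$ yields the clean recurrence $w(t)=w(t_L)\,w(t_R)/k$.

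With this recurrence in hand I would run a strong induction on $k$. The base cases $k=1$ (where $w(t)=1\le 2$) and $k=2$ (where $w(t)=1/2\le 1$) are immediate. For $k\ge 3$, if one of the two subtrees is empty---say $t_L=\emptyset$---then $w(t)=w(t_R)/k\le 2^{3-k}/k\le 2^{2-k}$ since $k\ge 2$. If instead both subtrees are nonempty with $k_L,k_R\ge 1$, the inductive hypothesis gives $w(t)\le 2^{2-k_L}\cdot 2^{2-k_R}/k = 2^{5-k}/k$, which is at most $2^{2-k}$ precisely when $k\ge 8$.

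The main obstacle is then the remaining subcase for the finitely many values $k\in\{3,4,5,6,7\}$, where the crude inductive estimate leaves a spurious factor $8/k>1$. Here one finishes by direct verification: the minimum $f_k:=\min_{|t|=k}\prod_{v\in t}|t_v|$ satisfies the explicit recurrence $f_k=k\cdot\min_{0\le j\le k-1} f_j\, f_{k-1-j}$ with $f_0=f_1=1$, giving $f_3=3$, $f_4=8$, $f_5=15$, $f_6=36$, $f_7=63$, each comfortably exceeding $2^{k-2}$. This closes the induction.
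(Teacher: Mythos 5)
Your proposal is correct and follows essentially the same path as the paper: both invoke the hook length formula to write $w(t)$ as the reciprocal of the product of fringe-subtree sizes, derive the multiplicative recurrence $w(t)=w(t_\ell)w(t_r)/k$ at the root, verify the small cases $k\le 7$ by direct computation (your $f_k$ values are exactly the reciprocals of the paper's $w_k$), and close with strong induction for $k\ge 8$, where the two-nonempty-subtree case produces the same bound $2^{5-k}/k\le 2^{2-k}$.
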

\begin{proof}[Key ideas of the proof]
Transforming the hook length formula into a recursive relation we prove
\[
w(t) = 
\begin{cases}
\frac1k w(t') & \text{ if the root of $t$ has one child $t'$} \\
\frac{1}{k} w(t_\ell) w(t_r) & \text{ if the root of $t$ has the two children $t_\ell$ and $t_r$}.
\end{cases}
\]
Set $w_n:=\max_{t\in\mathcal P_n} w(t)$. 
Then compute the first values $w_1$ up to $w_7$, which confirms the claim, and then
proceed by induction for $k\ge 8$. 
\end{proof}

Finally, note that the term by term inverse of the sequence $(w_n)_{n\ge 0}$ 
corresponds to the sequence stored as
\texttt{OEIS A132862}\footnote{This corresponds to the reference of this
  sequence in Sloane's Online Encyclopedia of: Integer
	Sequences \url{www.oeis.org}.}.

By the same combinatorial argument as in the previous section we know that $S_t(z)$ 
has a unique dominant singularity $\tilde{\rho}_t$, which is greater than the dominant 
singularity $\rho=1$ of $T(z)$. Thus, we set again $\tilde{\rho}_t=\rho(1+\epsilon_t)=1+\epsilon_t$.
Since \eqref{equ:ODE_St} is a Riccati differential equation (\emph{cf.}~\cite{Ince44} for 
a background on Riccati equations),
we use the ansatz $S_t(z)=\frac{-u'(z)}{u(z)}$ to get the transformed equation
\begin{equation}
\label{equ:ode_u}
	u''(z)-2  u'(z)+(1-w(t)kz^{k-1})  u(z) = 0,
\end{equation}
where we use the same abbreviations as in the previous section, namely $k:=|t|$ and
$w(t):=\frac{\ell(t)}{k!}$. Note that the condition $S_t(0)=0$ implies $u'(0)=0$ and $u(0)\neq 0$. 

The singularities of a function $u(z)$ solving a linear differential equation (with polynomial coefficients)
are given by the singularities of the coefficient of the highest derivative,
\emph{i.e.}, in our case the coefficient of $u''(z)$, which is 1. 
The reader can refer to Miller~\cite{Miller06} for details.
Thus, we can conclude that $u(z)$ is an entire function. As a direct consequence we know that the 
singularities of $S_t(z)$ are given by the zeros of $u(z)$ (that are not zeros of $u'(z)$) and are therefore poles.
More precisely the dominant singularity $\tilde{\rho}_t$ must be a simple pole for $S_t(z)$,
since for $u(z)=(\tilde{\rho}_t-z)^lv(z)$, (such that $\rho$ is not a zero of $v(z)$), it follows that
$u'(z) = -(\tilde{\rho}_t-z)^{l-1}v(z)+(\tilde{\rho}_t-z)^lv'(z)$. Thus
\[
	S_t(z)=\frac{l}{\tilde{\rho}_t-z}-\frac{v'(z)}{v(z)},
\]
which implies 
\begin{equation*}
	S_t(z) \underset{z \rightarrow \tilde{\rho}_t}\sim \frac{l / \tilde{\rho}_t}{1- z / \tilde{\rho}_t}.
\end{equation*}
Taking the derivative we get
$S_t'(z) \sim \frac{1}{\tilde{\rho}_t^2}\frac{l}{\left( 1- z / \tilde{\rho}_t \right)^2}$.
Plugging in the asymptotic expressions for $S_t$ and $S_t'$ in the original differential equation~\eqref{equ:ODE_St} we get
\[
	\frac{1}{\tilde{\rho}_t^2}\frac{l}{\left( 1- \frac{z}{\tilde{\rho}_t} \right)^2}
		 \underset{z \rightarrow \tilde{\rho}_t}\sim \left( 1 + \frac{l / \tilde{\rho}_t}{1- \frac{z}{\tilde{\rho}_t}} \right)^2,
\]
since the monomial $P_t$ is analytic in $\tilde{\rho}_t$. 
Comparing the main coefficients yields $l= 1$, and thus $\tilde{\rho}_t$ is a simple zero of the function $u(z)$ and 
\[
	S_t(z) \underset{z \rightarrow \tilde{\rho}_t}\sim \frac{1}{\tilde{\rho}_t-z}.
\]

\subsubsection*{How to proceed}
As in the previous section, we have a singularity $\tilde{\rho}_t = 1+\epsilon_t$ with $\epsilon_t>0$ depending on $t$, or $k$. 
In order to get results on the average size of the compacted tree of a random increasing binary tree
we proceed similarly to the recursive tree case. Lemma~\ref{lem:asympepsilon_binary} gives an asymptotic 
expression for $\tilde{\rho}_t$ that quantifies its dependence on $t$, when the size $k$
of the ``forbidden'' tree tends to infinity. 

As a next step, Lemma~\ref{lem:nosing} shows that $S_t(z)$ has a unique dominant singularity $\tilde{\rho}_t$
on the circle of convergence, which is used in Proposition~\ref{prop:fracsttasymp} to obtain
the asymptotic behavior of the coefficients of the generating function $S_t(z)$. 

Again, the average size of a compacted tree can be represented as a sum over the forbidden trees, 
where we distinguish between the two cases whether the size of the trees is smaller or larger
than $\ln n$ in order to get an upper bound (see Propositions~\ref{prop:estimate_sum1} 
and~\ref{prop:thirdsum_binary}).

\bigskip
We start from the equation $u''(z)-2  u'(z)+(1-w(t)kz^{k-1})  u(z) = 0$
with the initial conditions $u(0)=\gamma$, and $u'(0)=0$. 
The value $\gamma$ can be chosen arbitrarily, as $S_t(z)=\gamma u'(z) / (\gamma u(z))$,
and thus, $\gamma$ cancels.	For simplification reasons in the following
we choose $u(0)=-1$ together with the initial condition $u'(0)=0$.
\begin{lem}
\label{lem:u_value}
	The function $u(z)$ defined by the differential equation~\eqref{equ:ode_u} and
	the initial conditions $u(0)=-1$ and $u'(0)=0$ satisfies
	\[
		u(z)= z e^z \sum_{m \geq 0} \left( \frac{w(t)k}{(k+1)^2} \right)^m 
		\frac{1}{m!  (m+\alpha)_m} z^{(k+1)m} 
		- e^z \sum_{m \geq 0} \left( \frac{w(t)k}{(k+1)^2} \right)^m \frac{1}{m!  (m-\alpha)_m} z^{(k+1)m},
	\]
	where $(x)_m$ denotes the falling factorials $(x)_m=x(x-1)\cdots(x-m+1)$
	and $\alpha = 1/(k+1)$.
\end{lem}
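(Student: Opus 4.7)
The plan is to eliminate the first-derivative term by the substitution $u(z) = e^z v(z)$. A short computation gives $u''(z) - 2u'(z) = e^z(v''(z) - v(z))$, so the ODE \eqref{equ:ode_u} simplifies to the much cleaner
\[
v''(z) = w(t)\, k\, z^{k-1}\, v(z),
\]
while the initial conditions transform as $v(0) = u(0) = -1$ and $v'(0) = u'(0) - u(0) = 1$. So the task is reduced to finding the power-series solution of this two-term ODE with these initial data, and then multiplying the result by $e^z$.

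Next I would seek a power series $v(z) = \sum_{n\ge 0} a_n z^n$ and read off the recursion by equating coefficients. Writing $z^{k-1}v(z) = \sum_{n\ge k-1} a_{n-k+1}\, z^n$, one obtains
\[
(n+2)(n+1)\, a_{n+2} = w(t)\, k\, a_{n-k+1} \qquad\text{for } n\ge k-1,
\]
and $a_2 = a_3 = \cdots = a_k = 0$. The crucial observation is that this recursion links $a_{n+2}$ only to $a_{n-k+1}$; since $(n+2) - (n-k+1) = k+1$, the non-zero coefficients split into exactly two independent chains, one starting from $a_0 = -1$ (indices $m(k+1)$) and one from $a_1 = 1$ (indices $m(k+1)+1$). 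All other $a_n$ vanish.

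Then I would iterate each chain. For the chain $b_m := a_{m(k+1)}$ the recurrence reads $b_m = \frac{w(t)k}{m(k+1)\,(m(k+1)-1)}\,b_{m-1}$, and for $c_m := a_{m(k+1)+1}$ it reads $c_m = \frac{w(t)k}{(m(k+1)+1)\, m(k+1)}\, c_{m-1}$. The main (but routine) algebraic step is to rewrite the denominators using $\alpha = 1/(k+1)$ via the identities
\[
m(k+1)-1 = (k+1)(m-\alpha), \qquad m(k+1)+1 = (k+1)(m+\alpha),
\]
so that telescoping gives
\[
b_m = -\Bigl(\tfrac{w(t)k}{(k+1)^2}\Bigr)^m \frac{1}{m!\,(m-\alpha)_m}, \qquad
c_m = \Bigl(\tfrac{w(t)k}{(k+1)^2}\Bigr)^m \frac{1}{m!\,(m+\alpha)_m},
\]
where $(x)_m = x(x-1)\cdots(x-m+1)$ is recognized from $\prod_{j=1}^m (j\pm\alpha) = (m\pm\alpha)_m$.

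Finally, summing the two chains yields $v(z)$ as the difference of the claimed two series (the $z$-factor coming from $c_m$ contributing $z^{m(k+1)+1}$), and multiplying by $e^z$ produces the stated formula for $u(z)$. The only slightly delicate point in the whole argument is the reindexing identity $m(k+1)\pm 1 = (k+1)(m\pm\alpha)$ together with the recognition that $\prod_{j=1}^m(j\pm\alpha)$ is exactly the falling factorial $(m\pm\alpha)_m$; everything else is bookkeeping for the two arithmetic-progression chains.
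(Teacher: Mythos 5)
Your proof is correct and follows a genuinely different route from the paper's. The paper substitutes $y(z) = u(z)e^{-z}/\sqrt{z}$ and then changes variables to reduce the ODE to the classical Bessel equation, expresses the solution as a linear combination of $J_\alpha$ and $J_{-\alpha}$, fits the constants to the initial conditions, and only at the very end converts the Gamma quotients into falling factorials; along the way the authors must also explain away the apparent $\sqrt{z}$ branch point as an artifact of the Bessel normalization. You instead use the elementary substitution $u = e^z v$, which cleanly kills the first-derivative term and yields the two-term equation $v'' = w(t)kz^{k-1}v$; the regular power-series ansatz then splits by the index gap $k+1$ into two independent chains seeded by $v(0)=-1$ and $v'(0)=1$, and telescoping with the identity $m(k+1)\pm 1 = (k+1)(m\pm\alpha)$ gives the falling factorials directly. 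Your route is more self-contained and avoids any reference to special-function theory or the branch-point discussion, at the cost of not exhibiting the Bessel structure that the paper finds suggestive; both reach the same closed form, and your derivation is arguably cleaner for a reader who just wants to verify the formula.
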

Recall (for details refer to the book of Bender and Orszag~\cite{BO99}) that the solutions $y(z)$
of the ordinary differential equation 
\[
	z^2  y''(z) + z  y'(z) + (z^2 - \alpha^2)  y(z) = 0,
\]
with $\alpha$ not being an integer are linear combinations of the 
Bessel functions $J_{\alpha}(z)$ and $Y_{\alpha}(z)$.
Some modifications on \eqref{equ:ode_u} let us exhibit the combination of Bessel functions
that yields the result of Lemma~\ref{lem:u_value}.
%
%
%
\begin{proof}[Proof (sketch)]
Substituting $y(z):= u(z) \cdot \exp(-z) / \sqrt{z}$ and then $x:= \left(
\frac{k+1}{2\sqrt{-w(t)k}}z \right)^{2/(k+1)}$ transforms the differential equation for $u(z)$
into 
\[
      \beta^2  y''( \beta ) + \beta  y'(\beta) + \left( \beta^2 - \frac{1}{(k+1)^2}
      \right)  y(\beta)= 0,
\]
with $\beta = \frac{2\sqrt{-w(t)k}}{k+1} t^{(k+1)/2}$.  We recognize the Bessel equation and thus
$y(\beta)$ is a linear combination of the Bessel functions 
$J_{\alpha}(\beta)$ and $Y_{\alpha}(\beta)$. 

Due to the relationship between the function $u(z), y(\beta)$ and the Bessel functions, 
we deduce $u(z)$ is a linear combination of the functions $f(z)$ and $\bar{f}(z)$ where 
\[
f(z) = \sqrt{z} \exp(z) J_{\alpha} \left( 2\tilde{\beta} z^{\frac{1}{2\alpha}}\right)
\quad\text{ and }\quad
\bar{f}(z) = \sqrt{z} \exp(z) J_{-\alpha} \left( 2\tilde{\beta} z^{\frac{1}{2\alpha}} \right),
\]
with $\tilde{\beta}:= \frac{\sqrt{-w(t)k}}{k+1}$ and $\alpha:=\frac{1}{k+1}$.

By means of the initial conditions $u(0)=-1$ and $u'(0)=0$ the coefficients of the linear
combination can be computed. Finally, using the well-known power series expansions for
$J_{\alpha}(x)$ and $J_{-\alpha}(x)$ as well as the formula
$\frac{\Gamma(1+\alpha)}{\Gamma(m+1+\alpha)}=\frac{1}{(m+\alpha)_m}$, with 
$(x)_m$ being the falling factorials $(x)_m=x(x-1)\cdots(x-m+1)$, the previously obtained sum of
power series eventually simplifies to the expression in the assertion. 
\end{proof}

We are now ready to analyze the dominant singularity of $S_t(z)$.
\begin{lem}
\label{lem:asympepsilon_binary}
	Let $S_t(z)$ be the generating function of the perturbed combinatorial class of plane 
	increasing binary trees that do not contain the shape $t$ as a subtree (of size $k$).
	With $\tilde{\rho}_t$ denoting the dominant singularity of $S_t(z)$, we get
	\[
		\tilde{\rho}_t = 1 + \epsilon_t \underset{k\rightarrow \infty}\sim  1 + \frac{2w(t)}{k^2},
	\]
	where $w(t)=\frac{\ell(t)}{k!}$ and $\ell(t)$ denotes the hook length of $t$.
\end{lem}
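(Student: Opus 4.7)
The strategy is to use the explicit representation of $u(z)$ from Lemma~\ref{lem:u_value} and exploit that the parameter $c := w(t)k/(k+1)^2$ is extremely small. Indeed, combining Lemma~\ref{lem:hook} with the definition of $c$ gives $c = O(k/2^k)$, so the two series in the expression for $u(z)$ are essentially determined by their first two terms; everything beyond $m=1$ contributes $O(c^2)$, which is of lower order than the claimed estimate $\epsilon \sim 2w(t)/k^2$.

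\textbf{Step 1: Rewriting the characteristic equation.} Factor out the common $e^z$ to write $u(z)=e^{z}\bigl(zA(z)-B(z)\bigr)$, where
\[
A(z) = \sum_{m\ge 0} \frac{c^m z^{(k+1)m}}{m!\,(m+\alpha)_m}, \qquad
B(z) = \sum_{m\ge 0} \frac{c^m z^{(k+1)m}}{m!\,(m-\alpha)_m}.
\]
Since $e^z\neq 0$, the dominant singularity $\tilde\rho$ of $S_t(z)$ is the smallest positive root of $zA(z) = B(z)$.

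\textbf{Step 2: Expansion to first order in $c$.} Using $(1+\alpha)_1 = (k+2)/(k+1)$ and $(1-\alpha)_1 = k/(k+1)$, the equation $zA(z)=B(z)$ reads
\[
z - 1 \;=\; c\,(k+1)\!\left[\frac{z^{k+1}}{k} - \frac{z^{k+2}}{k+2}\right] + O(c^2),
\]
uniformly for $z$ in a fixed neighbourhood of $1$.

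\textbf{Step 3: Bootstrapping.} A crude a priori bound (say, a contraction/fixed-point argument applied to the displayed equation, or simply reusing that $\tilde\rho>1$ is uniquely determined and the right-hand side is $O(c)$ for $z$ near $1$) shows that $\epsilon = \tilde\rho - 1 = O(c)$. Since $c$ is exponentially small in $k$, we have $(k+1)\epsilon\to 0$, hence $\tilde\rho^{k+1} = 1 + O((k+1)\epsilon) \to 1$ as $k\to\infty$.

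\textbf{Step 4: Conclusion.} Substituting $\tilde\rho^{k+1}\sim 1$ and $\tilde\rho^{k+2}\sim 1$ into the equation from Step 2 gives
\[
\epsilon \;\sim\; c(k+1)\!\left[\frac1k - \frac1{k+2}\right] \;=\; \frac{2c(k+1)}{k(k+2)} \;=\; \frac{2w(t)}{(k+1)(k+2)} \;\sim\; \frac{2w(t)}{k^2},
\]
which is the claimed asymptotic.

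\textbf{Main obstacle.} The delicate point is the bootstrapping in Step~3: one must justify that among all positive roots of $zA(z)=B(z)$, the smallest one indeed lies in the small neighbourhood of $1$ where the first-order expansion is valid, rather than somewhere far from $1$. This uses the general combinatorial existence of $\tilde\rho$ (established earlier for $S_t(z)$) together with an a priori upper bound $\tilde\rho\le 1+O(c)$ obtained by checking that the left-hand side $z-1$ and the right-hand side $O(c)$ of the equation in Step~2 must match at $z=\tilde\rho$. Once this is settled, the rest is a routine comparison of asymptotic orders.
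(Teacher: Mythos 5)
Your overall line of attack is the same as the paper's: start from the explicit $u(z)$ in Lemma~\ref{lem:u_value}, observe via Lemma~\ref{lem:hook} that the parameter $c=w(t)k/(k+1)^2$ is exponentially small so the two series truncate after $m=1$, write the resulting first-order equation for $\tilde\rho=1+\epsilon$, bootstrap to show $\epsilon$ is small so that $\tilde\rho^{k+1}\to 1$, and then read off $\epsilon\sim 2w(t)/k^2$. Steps~1, 2, 4 and 5 match the paper's computation.

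The gap is in Step~3, and it is the same gap you flag yourself. Your proposed justification --- ``the left-hand side $z-1$ and the right-hand side $O(c)$ of the equation in Step~2 must match at $z=\tilde\rho$'' --- is circular: to know that the right-hand side is $O(c)$ at $z=\tilde\rho$ you need $\tilde\rho^{k+1}$ bounded, i.e. $\tilde\rho-1=O(1/k)$, which is precisely what you are trying to prove. (The sketched contraction argument can in fact be pushed through, but it is not routine: you have to produce an explicit disk around $1$ on which $zA(z)-B(z)=z-1+h(z)$ with $|h|,|h'|$ small, locate the unique fixed point there, and then invoke monotonicity of $u$ on the real segment together with the combinatorial fact $\tilde\rho>1$ to identify that fixed point with $\tilde\rho$; none of this is in your write-up.) The paper sidesteps all of this with a two-line intermediate value argument: $u(0)=-1<0$, while $u(1+1/k^2)=1/k^2+\mathcal{O}(w(t)/k)>0$ for $k$ large because $w(t)$ decays exponentially, and since $\tilde\rho>1$ is the smallest positive zero of $u$ it must lie in $(1,1+1/k^2)$. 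This gives $\epsilon=\mathcal{O}(1/k^2)$, which is weaker than your claimed $\epsilon=O(c)$ but entirely sufficient, since it already forces $(1+\epsilon)^{k+1}\to 1$. I would replace your Step~3 with this evaluation of $u$ at $1+1/k^2$; the rest of your proof then goes through unchanged.
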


\begin{proof}
For combinatorial reasons we deduced that the equation $u(z)=0$ must have a solution $\tilde{\rho}_t>1$ 
and no smaller positive solution. When $k$ tends to infinity we expect that
$\tilde{\rho}_t=1+\epsilon_t$ tends to $1$, \emph{i.e.} $\epsilon_t$ tends to $0$. 

First observe that $u(0)=-1$ and 
\[
	u\(1+\frac1{k^2}\) = \frac1{k^2} + \Ord{\frac{w(t)}k} > 0, 
\]
as $w(t)$ decays exponentially due to Lemma~\ref{lem:hook}. Thus $\epsilon_t=\Ord{1/k^2}$ and 
plugging $z=1+\epsilon_t$ into $u(z)=0$ gives then 
\[
	\epsilon_t + (1+\epsilon_t)^{k+1} \frac{w(t)k}{(k+1)^2} \left( \frac{1+\epsilon_t}{1+\alpha} 
		- \frac{1}{1-\alpha} \right) = \mathcal{O} \left( \frac{w(t)^2}{k^2} \right).
\]
This implies $\epsilon_t - 2 w(t)/k^2 = \mathcal{O} \left( w(t)^2/k^2 \right)$  
and hence $\epsilon_t \sim 2 w(t)/k^2$, which finishes the proof.
\end{proof}
So, Lemma~\ref{lem:asympepsilon_binary} ensures that for $|t|=k$ tending to infinity
the generating function $S_t(z)$ has a dominant singularity at $\tilde{\rho}_t \sim 1+ 2w(t) / k^2$.
Now we show that in a sufficiently large disk there is no other singularity for $S_t(z)$.
\begin{lem}
\label{lem:nosing}
	Let $\tilde{\rho}_t$ be the dominant singularity of $S_t(z)$. 
	Then, for all $\delta>0$ the following assertion holds: If $k$ is sufficiently large,
	then the generating function $S_t(z)$ does not have any singularity in
	the domain $\tilde{\rho}_t < |z| < 1 + \frac{(1-\delta)\ln(1/w(t)) + \ln k}{k}$.
\end{lem}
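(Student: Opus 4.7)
My plan is to apply Rouché's theorem on the closed disk $|z|\le R$ with $R:=1+\frac{(2-\delta)\ln k}{k}$. The singularities of $S_t(z)=-u'(z)/u(z)$ are the zeros of $u(z)$, so it suffices to show that $u$ has exactly one zero in $|z|<R$; this forced zero must coincide with $\tilde\rho$ (whose existence inside the disk follows from Lemma~\ref{lem:asympepsilon_binary}), and hence no other singularity of $S_t$ lies in the annulus $\tilde\rho<|z|<R$.

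Starting from Lemma~\ref{lem:u_value}, I split $u(z)=A(z)+B(z)$ by extracting the $m=0$ contributions from both sums; these combine to give $A(z):=(z-1)e^z$, and everything else is collected into
\[
B(z):=e^z\sum_{m\ge 1}\bigl(c_m^{(1)}z-c_m^{(2)}\bigr)z^{(k+1)m},
\]
where $c_m^{(1)}:=\bigl(\tfrac{w(t)k}{(k+1)^2}\bigr)^m\tfrac{1}{m!\,(m+\alpha)_m}$, $\alpha=1/(k+1)$, and $c_m^{(2)}$ is the analogue with $-\alpha$. A direct extremum calculation for $|z-1|^2|e^z|^2$ on the circle $|z|=R$ (parametrizing $z=Re^{i\theta}$ and differentiating in $\cos\theta$) locates the unique interior critical point at $\cos\theta=R/2$ as a maximum, so the minimum is attained at one of the real endpoints; for $R$ close to $1$ this is the positive real point $z=R$, yielding $\min_{|z|=R}|A(z)|=(R-1)e^R=\frac{(2-\delta)\ln k}{k}\,e^R$.

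The heart of the argument is the upper bound on $|B(z)|$. Since $\alpha=1/(k+1)$ is small, the falling factorials satisfy $(m\pm\alpha)_m\sim m!$ (more precisely, one checks $(m\pm\alpha)_m\ge m!/2$ uniformly once $k$ is large), and combining this with the estimate $R^{k+1}\sim k^{2-\delta}$ gives
\[
\max_{|z|=R}|B(z)|\le C\,e^R\sum_{m\ge 1}\frac{1}{(m!)^2}\bigl(w(t)\,k^{1-\delta}\bigr)^m=\Ord{e^R\,w(t)\,k^{1-\delta}},
\]
the last step using that $w(t)k^{1-\delta}\to 0$ so the series is dominated by its $m=1$ term. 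The decisive input here is Lemma~\ref{lem:hook}, which forces $w(t)\le 2^{-(k-2)}$; this makes $w(t)k^{1-\delta}$ decay exponentially in $k$, so $|B(z)|$ is overwhelmed by $\tfrac{(2-\delta)\ln k}{k}e^R\le |A(z)|$ uniformly on $|z|=R$ once $k$ is large enough.

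With the inequality $|B(z)|<|A(z)|$ established on $|z|=R$, Rouché's theorem implies that $u=A+B$ has the same number of zeros (with multiplicity) in $|z|<R$ as $A(z)=(z-1)e^z$, namely one, which must therefore be $\tilde\rho$. The main technical obstacle I anticipate is making the tail estimate for $B(z)$ fully uniform in $t$: the coefficients depend on both $w(t)$ and $\alpha=1/(k+1)$, so one must carefully bound $(m\pm\alpha)_m$ from below across all admissible $m$. However, the exponential decay of $w(t)$ from Lemma~\ref{lem:hook} absorbs all polynomial factors comfortably and renders the estimate robust.
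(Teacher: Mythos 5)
Your Rouché argument is correct and takes a genuinely different route from the paper. The paper works with $\tilde u(z) = u(z)e^{-z} = zF(z) - G(z)$, rewrites it as $(z-1)F(z) + (F(z)-G(z))$, and then shows by a direct two-case estimate (one annulus with $|z| = 1 + O(1/k)$, the other with $|z| = 1 + C_k/k$ where $C_k\to\infty$) that $\tilde u(z)\sim z-1$ in the whole region, which cannot vanish away from $\tilde\rho$; the Case~1 sub-argument that a near-unit-modulus zero must coincide with $\tilde\rho$ is somewhat informal. Your version instead isolates the $m=0$ term to get $u=(z-1)e^z+B$ and applies Rouché on a single circle $|z|=R$; this is cleaner, delivers the slightly stronger conclusion that $u$ has \emph{exactly one} zero in the full disk $|z|<R$ (not just none in the annulus), and avoids the case split entirely. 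Your extremum computation for $|A|$ on $|z|=R$ and the conclusion $\min_{|z|=R}|A(z)|=(R-1)e^R$ both check out. One small imprecision: the blanket claim $(m-\alpha)_m\ge m!/2$ for all $m$ is false, since $\prod_{j=1}^m(1-\alpha/j)\sim m^{-\alpha}\to 0$ as $m\to\infty$ for fixed $\alpha$. It is easily repaired — e.g.\ $(m-\alpha)_m\ge(1-\alpha)(m-1)!\ge(m-1)!/2$ gives $|c_m^{(2)}|\le 2m/(m!)^2\cdot(w(t)/k)^m$, and the extra factor $m$ is swallowed by the factorial decay — so the bound $\max_{|z|=R}|B(z)|=\Ord{e^R w(t)k^{1-\delta}}$ still holds, and the comparison with $(R-1)e^R=\frac{(2-\delta)\ln k}{k}e^R$ goes through thanks to the exponential decay of $w(t)$ from Lemma~\ref{lem:hook}.
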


\begin{proof}[Proof (sketch)]
First note that the singularities of $S_t(z)$ are exactly the zeros of $u(z)$. Define
$\tilde{u}(z) := u(z)\exp(-z)$ and note that $u(z)$ and $\tilde{u}(z)$ have the same zeros. By
Lemma~\ref{lem:u_value} we can write $\tilde{u}(z) = zF(z)-G(z)$ with
\begin{align*}
	F(z) &= \sum_{m \geq 0} \left( \frac{w(t)k}{(k+1)^2} \right) ^m \frac{1}{m!} \frac{1}{(m+\alpha)_m} z^{(k+1)m}, \quad\text{and}\\
	G(z) &= \sum_{m \geq 0} \left( \frac{w(t)k}{(k+1)^2} \right) ^m \frac{1}{m!} \frac{1}{(m-\alpha)_m} z^{(k+1)m},
\end{align*}
with $\alpha := 1 / (k+1)$. Therefore we get $|F(z)-G(z)|= \Ord{w(t)|z|^{k+1}/k^2}$. 
Now, let us rewrite $\tilde{u}(z)$ as
\begin{align}
\label{equ:u(z)ggf}
\tilde{u}(z)=(z-1)F(z)+F(z)-G(z),
\end{align}
set $|z|=1+\eta$ and perform a distinction of two cases:

{\bf Case 1:} $\eta = \mathcal{O} \left( 1/k \right)$.
This implies $|z|^{k+1} = \Theta(1)$ for $k$ tending to infinity.
Thus $F(z) \sim 1$, $G(z) \sim 1$, and then $F(z)-G(z)\to 0$. 
In view of this, \eqref{equ:u(z)ggf} and $\tilde{u}(z)=0$ imply $z-1 \sim G(z)-F(z)= 
\mathcal{O} \left( w(t)/k^2 \right)$ and thus $|z-1| = \Ord{\tilde{\rho}_t -1}=o(1/k)$. 

But for zeros $z_0$ of $\tilde{u}(z)$ with $|z_0|=1+o\left( 1/k \right)$ we know 
$z_0-1 \sim (2w(t)/k^2)\cdot z_0^k \sim 2w(t)/k^2$, so $z_0^k \sim 1$. 
Hence $z_0 \sim \sqrt[k]{1} = \cos \left( \frac{2\pi}{k} \right) + i \sin \left(
\frac{2\pi}{k} \right)$ which contradicts $z_0-1 \sim 2w(t)/k^2$. 
Thus, the function $\tilde{u}(z)$
has no zeros in the domain $\tilde{\rho}_t < |z| \leq 1+ \mathcal{O} \left( 1/k\right)$.

{\bf Case 2:} $\eta = C_k/k$, with $C_k \leq (1-\delta)\ln \rcp{w(t)}+\ln k$.
In this case we have $|z|^{k+1} \le e^{C_k} = \Ord{k/w(t}^{1-\delta})$, and thus 
$|F(z)-G(z)|=o(1/k)$ and $F \sim 1 + \Ord{w(t)^\delta}$. Using again 
\eqref{equ:u(z)ggf} yields 
\begin{equation} \label{tilde_u}
\tilde{u}(z) = z-1 + o(|z-1|w^\delta)+o(1/k) \sim z-1.
\end{equation} 
	Since $|z|=1+\eta$ we have $|z-1| \geq C_k/k>1/k$ and thus $\tilde{u}(z)$ cannot be zero
in $\tilde{\rho}_t < |z| < 1+ ((1-\delta)\ln \rcp{w(t)}+\ln k)/k$.	\qedhere
\end{proof}

Now we are interested in the ratio $[z^n]S_t(z) / [z^n]T(z)$, which corresponds 
to the probability that a random plane binary tree of size $n$ does not contain the 
binary tree shape $t$ as a fringe subtree.

\begin{prop}
\label{prop:fracsttasymp}
Let $T(z)$ be the generating function of plane binary increasing trees and $S_t(z)$ the generating
function of the perturbed class that has the dominant singularity $\tilde{\rho}_t$. Fix a constant
$L>2$. Then, uniformly for $D\le |t|\le n$ with $D$ independent of $n$ and sufficiently large, 
the following asymptotic relations hold, depending on the magnitude of $w(t)$:
\begin{itemize}
\item If $\ln\rcp{w(t)}\le L k$, then
\[
[z^n]S_t(z) = \tilde{\rho}_t^{-n-1}
\(1+\Ord{\exp\(-\frac nk\cdot \frac{\ln(L+1)}L
\ln\rcp{w(t)}\)}\), \text{ as } \nti.
\]
\item If $\ln\rcp{w(t)}>Lk$, then
\[
[z^n]S_t(z) = \tilde{\rho}_t^{-n-1}
\(1+\Ord{ \ln (k) \exp\(-n \(\ln\((1-\delta)\ln\rcp{w(t)}\)-\ln k\)\)}\),
\text{ as } \nti, 
\]
with arbitrary $\delta>0$.
\end{itemize}
\end{prop}
\brem
In contrast to Proposition~\ref{lem:asympst} there are only two cases. The reason is that we know
from Lemma~\ref{lem:hook} that $\ln\rcp{w(t)}$ cannot be too small. In fact, we have $(k-1)\ln
2\le \ln\rcp{w(t)}\le k\ln k$.
\erem


\begin{proof}[Proof (sketch)]
First, let us remember that $\tilde{\rho}_t$ is a unique zero of the function $u(z)$. Thus, we
can write
\begin{equation}
\label{equ:u(z)v(z)}
	u(z)= \left( 1- \frac{z}{\tilde{\rho}_t} \right) v(z),
\end{equation}
with $v(\tilde{\rho}_t) \neq 0$ and by Lemma \ref{lem:nosing} we additionally know that
$v(z) \neq 0$ in $\tilde{\rho}_t < |z| < 1 + \frac{(1-\delta)\ln(1/w(t))+\ln k}{k}$,
provided that $k$ is sufficiently large. This implies  
\[
	S_t(z) = \frac{1}{\tilde{\rho}_t-z} - \frac{v'(z)}{v(z)}.
\]
And thus,
\begin{equation}
\label{equ:Stasympschritt1}
	[z^n]S_t(z)=\tilde{\rho}_t^{-n-1}-[z^n]\frac{v'(z)}{v(z)} = \tilde{\rho}_t^{-n-1} - (n+1)[z^{n+1}]\ln v(z).
\end{equation}
Now, we estimate the second summand in \eqref{equ:Stasympschritt1}.
First we use a Cauchy integral to write
\begin{equation}\label{equ:intforv}
	n~[z^n]\ln v(z) = \frac{n}{2\pi i} \int_{\mathcal{C}} \frac{\ln v(t)}{t^{n+1}}\dt,
\end{equation}
where the curve $\mathcal{C}$ is described by $|t|=1+ \frac{(1-\delta)\ln(1/w(t))+\ln k}{k}$ 
with some $\delta>0$. The absolute value of the logarithm of $v(z)$ is given by
$| \ln v(z)| = \left| \ln \left( |v(z)| e^{i\arg v(z)} \right) \right| = \left| \ln |v(z)| +  
i\arg(v(z)) \right|$. Furthermore, by \eqref{equ:u(z)v(z)} we have
$|v(z)| = |u(z)|/\left| 1 - z / \tilde{\rho}_t \right|$, 
which can be estimated along $\mathcal{C}$ via
\[
	\frac{|u(z)|}{2+\ln k}\leq |v(z)| \leq \frac{k|u(z)|}{(1-\delta)\ln(1/w(t))}.
\]
Now, we have to estimate $|u(z)|$. Using the expansion in Lemma~\ref{lem:u_value} and estimating,
we find that there is a $\mu>0$ such that 
\begin{align*} 
|u(z)| &\leq e^{|z|} \sum_{m \geq 0} \left( \frac{w(t)}{k} \right)^m \frac{1}{m!} 
\left| \frac{z}{(m+\alpha)_m} - \frac{1}{(m-\alpha)_m} \right| |z|^{(k+1)m} 
\\ 
&\le e^{|z|} \sum_{m \geq 0} w^{\delta m}\frac{2+\mu}{m!(m-\alpha)_m}=\Ord{k}.
\end{align*} 
To get a lower bound, observe that $|u(z)|\ge
e^{-|z|} |\tilde u(z)|\ge |\tilde u(z)|/(ke)$ and by \eqref{tilde_u} we have $\tilde u(z)\sim z-1$.
This yields $|u(z)|\ge (1-\delta)\ln(1/w(t))/(k^2e)=\Omega(\ln(k)/k^2)$.
	
Putting all together, we can estimate the integral~\eqref{equ:intforv} by
\begin{align*}
	n|[z^n]\ln v(z)| &= \Ord{n\ln k \(1+\frac{(1-\delta)\ln(1/w(t))+\ln k}k\)^{-n}} \\
	&= \Ord{n\tilde\rho_t^{-n} \ln k \(1+\frac{(1-\delta)\ln(1/w(t))+\ln k-\delta}k\)^{-n}}\\ 
	&=\Ord{n\tilde\rho_t^{-n} \ln k \(1+\frac{(1-\delta)\ln(1/w(t))}k+\frac{\ln n}n\)^{-n}}.
\end{align*} 
Finally, proceed as at the end of the proof of Proposition~\ref{lem:asympst} to complete the
proof.  
\end{proof}

Now, we split the sum of interest, \emph{i.e.} $\sum_{t \in \mathcal{B}} \mathbb{P} \left[ t
\text{ occurs at subtree of } \tau \right]$, where $\tau$ denotes a plane increasing binary tree
of size $n$ and $\mathcal{B}$ denotes the class of (unlabeled) plane binary trees, analogously as
we did in the previous section for recursive trees.

\begin{rem}
Now our underlying class of tree shapes is the class of plane binary trees and no more the 
class of P\'olya trees. Since the dominant singularity of the generating function 
of binary trees is $1/4$, we use henceforth $\log_4 n$, the logarithm with respect to base $4$.
\end{rem}

\begin{equation}\label{equ:threesums_binary}
	\mathbb{E} \left( X_n \right) = \sum_{\substack{t \in \mathcal{B}_{\leq n}\\ k< \log_4 n}} \left( 1 - \frac{[z^n]S_t(z)}{[z^n]T(z)} \right) +  
	\sum_{\substack{t \in \mathcal{B}_{\leq n}\\ k \geq \log_4 n}} \left( 1 - \frac{[z^n]S_t(z)}{[z^n]T(z)} \right).
\end{equation}
In order to estimate the first sum, we proceed analogously to Proposition~\ref{lem:estimate_sum1}.
\begin{prop}\label{prop:estimate_sum1}
	Let $B(z)$ be the generating function associated to $\mathcal{B}$,
	of (unlabeled) binary trees, whose dominant singularity is~$1/4$.
	Then asymptotically when $n$ tends to infinity we have
	\[
		\sum_{\substack{t \in \mathcal{B}_{\leq n}\\ k < \log_4 n}} \( 1 - \frac{[z^n]S_t(z)}{[z^n]T(z)} \)
			 \underset{n\rightarrow \infty}=  \mathcal{O} \( \frac{n}{\sqrt{(\ln n)^3}} \).
	\]
\end{prop}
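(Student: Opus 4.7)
My plan is to mirror the strategy used in Proposition~\ref{lem:estimate_sum1} for recursive trees, replacing the class of P\'olya trees by the class $\mathcal{B}$ of (unlabeled) plane binary trees. Since each summand $1-[z^n]S_t(z)/[z^n]T(z)$ is a probability, I will simply bound it by $1$, so the whole sum is at most the number of distinct shapes of size less than $\log n$, namely
\[
\sum_{\substack{t \in \mathcal{B}_{\leq n}\\ k < \log n}} \( 1 - \frac{[z^n]S_t(z)}{[z^n]T(z)} \) \;\leq\; \sum_{k<\log n} [z^k]B(z).
\]

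Next I would use the classical singular expansion of the generating function of binary trees. From the unlabeled counterpart $B=z(1+B)^2$ of the specification~\eqref{spec:bin_tree} one obtains $1+B(z)=(1-\sqrt{1-4z})/(2z)$, whose dominant singularity is $\sigma=1/4$ and whose coefficients satisfy $[z^k]B(z)\sim 4^k/\sqrt{\pi k^3}$ (Catalan asymptotics). Because the growth is essentially geometric with ratio $1/\sigma=4$, the partial sums are dominated by the last term; more precisely,
\[
\sum_{k<\log n}[z^k]B(z)\;\sim\;\frac{1}{1-\sigma}\,[z^{\lfloor\log n\rfloor}]B(z)
\;=\;\mathcal{O}\!\left(\frac{4^{\lfloor\log n\rfloor}}{\sqrt{(\log n)^3}}\right).
\]
Finally, by the convention recalled in the remark preceding the proposition, $\log n$ is taken in base $4=1/\sigma$, so $4^{\lfloor\log n\rfloor}\le n$ and the bound reduces to $\mathcal{O}(n/\sqrt{(\log n)^3})$, which is the claimed estimate.

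There is essentially no obstacle here: the argument is purely a matter of counting the tree-shapes of small size and invoking the standard Catalan asymptotics, the only point requiring attention being the bookkeeping of the logarithmic base, which is already fixed by the remark above the statement. Thus the proof is completely analogous to that of Proposition~\ref{lem:estimate_sum1}, with $P(z)$ replaced by $B(z)$ and $\sigma\approx 0.338$ replaced by $\sigma=1/4$.
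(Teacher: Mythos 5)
Your proof is correct and is essentially identical to the paper's argument: bound each probability summand by $1$, reduce to counting binary tree shapes of size less than $\log_4 n$, invoke the Catalan asymptotics $[z^k]B(z)=\Theta(4^k k^{-3/2})$ to see that the partial sum is comparable to its last term, and use the choice of base $4$ for the logarithm to convert $4^{\lfloor\log n\rfloor}$ into $O(n)$. The only difference is cosmetic — you spell out the singular expansion of $B$ more explicitly than the paper does.
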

\begin{proof}
	A crude estimate gives 
	\begin{align*}
		\sum_{\substack{t \in \mathcal{B}_{\leq n}\\ k< \log_4 n}} \left( 1 - \frac{[z^n]S_t(z)}{[z^n]T(z)} \right) 
			&\leq \sum_{\substack{t \in \mathcal{B}_{\leq n}\\ k < \log_4 n}} 1 
				= \sum_{k < \log_4 n} [z^k]B(z) \underset{n\rightarrow \infty}\sim \frac{1}{1-\frac{1}{4}} [z^{\lfloor \log_4 n \rfloor}]B(z)\\ 
			&\underset{n\rightarrow \infty}= \mathcal{O} \( \frac{\left( \frac{1}{4} \right) ^{- \lfloor \log_4 n \rfloor}}{\sqrt{(\log_4 n)^3}} \).
	\end{align*}
	This is already sufficient, since $\left( \frac{1}{4} \right) ^{- \lfloor \log_4 n \rfloor} \leq n$, which completes the proof.
\end{proof}
Estimating the second sum in \eqref{equ:threesums_binary} is based on some counting arguments,
analogously to the  proof of Proposition~\ref{lem:estimate_sum3} in the previous section. 
However, due to the fewer grafting possibilities for the tree shape $t$ a straight-forward analog
of the proof of Proposition~\ref{lem:estimate_sum3} yields a too crude upper bound. Thus a finer
analysis is needed. 

\begin{prop}\label{prop:thirdsum_binary}
	Let $\mathcal{B}_{\leq n}$ denote the class of binary trees of size at most $n$. Then
	\[
		\sum_{\substack{t \in \mathcal{B}_{\leq n}\\ k \geq \log_4 n}}
			\left( 1 - \frac{[z^n]S_t(z)}{[z^n]T(z)} \right) \underset{n\rightarrow \infty}= \mathcal{O} \left( \frac{n}{\ln n} \right).
	\]
\end{prop}
\begin{proof}
Using a similar counting approach as the one proposed in Proposition~\ref{lem:estimate_sum3},
we obtain for $k=|t|<n$ 
\[
1 - \frac{[z^n]S_t(z)}{[z^n]T(z)} \leq  \frac{1}{n!} \sum_{\nu = 2}^{n-k+1} \binom{n-\nu}{k-1}
\ell(t) (n-k)! (\nu-1).
\]

Let $t$ be the shape that appears in a tree of size $n$ and $\nu$ be the root label of this
occurrence of $t$. Then there at most $\nu-1$ possibilities to attach $t$ to a tree of size $n-k$,
because the node to which $t$ is attached must have a label smaller than $\nu$ and a free place,
as we consider incomplete binary trees here. Moreover, there are $\binom{n-\nu}{k-1}$ ways to
choose the labels for $t$, $\ell(t)$ ways to make a proper labeling on $t$ with the chosen labels,
and $(n-k)!$ trees of size $n-k$ to which $t$ will be attached. As we want an upper bound, we do
not care for multiple counting. 

After simplification we obtain $\ell(t)/k! \cdot (n-k)/(k+1)$, but this is too large to get the
analog of Proposition~\ref{lem:estimate_sum3}. The problem here comes from the fact there are
usually much fewer possibilities to graft $t$, in particular if $|t|$ is small. 
To get a better upper bound, we rely on \cite[Theorem~5]{BG15}, where it is proved that the number
of binary increasing trees of size $i+j$ having exactly $k$ subtrees attached to the head of the
tree (that is the minimal subtree that contains all nodes labeled with the smallest $i$ 
labels) is
\[
	\binom{i+1}{k} \binom{j-1}{k-1} ~i!~j!.
\]
Here we are interested in trees of size $n-k$ containing the first $\nu-1$ labels and having
exactly $r$ available possibilities to graft the tree $t$, thus $\nu-r$ trees are already attached
to the head. According to the above formula the
number of such trees is
\[
	\binom{\nu}{\nu-r} \binom{n-k-\nu}{\nu-r-1}  ~(\nu-1)!~(n-k-\nu+1)!.
\]
If $\nu<n-k+1$ this value is correct for $r$ ranging from 0 to $\nu-1$. 
Otherwise if $\nu=n-k+1$ then $r$ can also be equal to $\nu$ and then the number of
possibilities to attach $t$ to all heads of size $n-k$ is $(n-k+1)!$. So we obtain the better upper
bound 
\begin{align*}
1 - \frac{[z^n]S_t(z)}{[z^n]T(z)} \leq & \frac{1}{n!} \sum_{\nu = 2}^{n-k+1} \binom{n-\nu}{k-1}
\ell(t)   \sum_{r=1}^{\nu-1} r  \binom{\nu}{\nu-r} \binom{n-k-\nu}{\nu-r-1}  (\nu-1)!  (n-k-\nu+1)!\\
& + \ell(t) \frac{(n-k+1)!}{n!}.
\end{align*}
Following~\cite[p. 169]{GKP94} this simplifies to  
\[
\sum_{r=1}^{\nu-1} r  \binom{\nu}{\nu-r} \binom{n-k-\nu}{\nu-r-1} = \nu  \binom{n-k-1}{\nu-2}.
\]
Using this result we deduce
\begin{align*}
1 - \frac{[z^n]S_t(z)}{[z^n]T(z)} &\leq \frac{\ell(t)}{n!} 
\sum_{\nu = 2}^{n-k+1} \frac{\nu!  (n-\nu)!  (n-k-1)!}{(k-1)!  (\nu-2)!  (n-k-\nu+1)!} + \ell(t) \frac{(n-k+1)!}{n!}\\
&= \ell(t) \frac{(n-k-1)!}{n!} 	\sum_{\nu = 2}^{n-k+1} \nu(\nu-1)  \binom{n-\nu}{k-1} + \ell(t) \frac{(n-k+1)!}{n!}.
\end{align*}			
Again using~\cite[p. 169]{GKP94}, we further simplify and get 
\[
\sum_{\nu = 2}^{n-k+1} \nu(\nu-1)  \binom{n-\nu}{k-1} = 2 \sum_{\nu = 2}^{n-k+1} \binom{\nu}{2}  \binom{n-\nu}{k-1} = 2 \binom{n+1}{k+2}.
\]
We thus conclude
\begin{align*}
1 - \frac{[z^n]S_t(z)}{[z^n]T(z)} &\leq \frac{\ell(t)}{k!} \left( \frac{2(n+1)}{(k+2)(k+1)} + \frac{k}{n}\frac{k-1}{n-1}\dots \frac{1}{n-k+1}\right) \\
& \leq \frac{\ell(t)}{k!} \left( \frac{2(n+1)}{(k+2)(k+1)} + \frac{1}{n-k+1}\right).  
\end{align*}

Furthermore, for $|t|=n$, we have
\[
1 - \frac{[z^n]S_t(z)}{[z^n]T(z)} = \frac{\ell(t)}{n!}.
\]	
Finally, we finish the proof like in Proposition~\ref{lem:estimate_sum3}
and get the stated result.
\end{proof}

\begin{thm}
\label{thm:bounded_exp}
	Let $X_n$ be the size of the compacted tree corresponding to a 
	random binary increasing tree of size $n$. Then
		\[
			\mathbb{E} \left( X_n \right) \underset{n\rightarrow \infty}= \mathcal{O} \left( \frac{n}{\ln n} \right).
		\]
\end{thm}
\begin{proof}
The result follows directly by combining the previous propositions.
\end{proof}

Recall that this result has already been shown in~\cite{FGM97,Devroye98}, even with $\Theta$
instead of big-$\mathcal{O}$.  Other proofs were presented as well, see~\cite{BL18,BW20}.

To get a crude lower bound for the number of non-isomorphic subtree shapes in a random increasing
binary tree, we may proceed as in the case of recursive trees. Indeed, the uniform asymptotics
given in Proposition~\ref{prop:fracsttasymp} enable us to derive a lower bound for for the dominant
singularity: $\tilde\rho_t> 1+ w(t)/k^2$ (\textit{cf.} Corollary~\ref{cor:coeff_lower_bound}).
With this bound we can perform all the steps of the proof of Proposition~\ref{thm:recursive_omega}
and get the lower bound $\Omega(\sqrt n\,)$.

Likewise, if the analog of Conjecture~\ref{c1} for plane binary increasing trees were true, then
we would be able to redo the proof of Theorem~\ref{thetabound} to obtain the lower bound
$\Omega(n/\ln n)$. Unfortunately, even the better knowledge on $w(t)$ given by
Lemma~\ref{lem:hook} is not sufficient to show the analog of Conjecture~\ref{c1}.

\section{A compressed data structure}
	\label{sec:data}

The probability model induced by plane increasing binary trees
is the classical permutation model of \emph{binary search trees} (or \textsc{bst}).
Thus the typical shape of a uniformly sampled plane increasing binary tree consisting of 
$n$ internal nodes corresponds to the typical shape of a binary search tree 
built using a uniform random permutation of $n$ elements.
See Drmota~\cite[Section 1.3.3]{Drmota09} for details about the latter correspondence.
Thus the tree structure of a typical \textsc{bst} has the properties we have found out 
in the previous section. In particular, by removing the information stored in the nodes 
the typical compaction of the tree
gives a compacted structure consisting of $\mathcal{O}(n / \ln n)$ nodes (on average).

%
%
%
%


Throughout this section, we aim at designing a new data structure based on the tree structure induced by the
compaction of a \textsc{bst} associated to some extra information in the nodes and the edges 
in order to keep all the information (the integer values) from the original \textsc{bst}.
And of course we must be able to retrieve information efficiently, as in \textsc{bst}s. Our approach
is supported with a python prototype and the experiments are obtained through this 
implementation.

The \textsc{bst} built for example on the permutation $(4, 8, 6, 2, 9, 1, 3, 7, 5)$ 
is represented with the classical tree structure in the left-hand side of Figure~\ref{fig:BST}.
This example will be used as an illustration throughout the whole section.
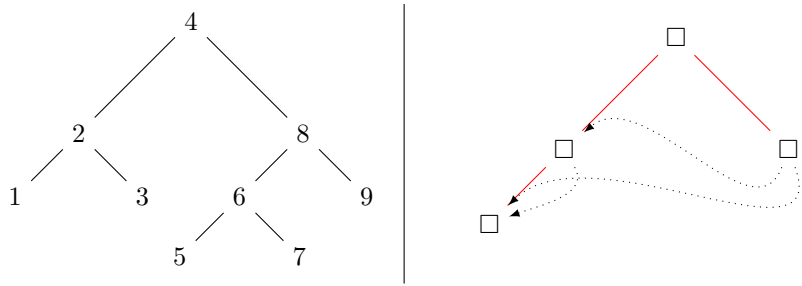
\begin{figure}[h]
	\begin{tabular}{c|c}
	\begin{tikzpicture}[node distance=32pt]
 		    \node (a) {$4$};
 		    \node[below left of=a, node distance=60pt] (b) {$2$};
 		    \draw (a) -- (b);
 		    \node[below right of=a, node distance=60pt] (bb) {$8$};
 		    \draw (a) -- (bb);
 		    \node[below left of=bb, node distance=34pt] (ee) {$6$};
 		    \draw (bb) -- (ee);
 		    \node[below right of=bb, node distance=34pt] (eeee) {$9$};
 		    \draw (bb) -- (eeee);		    
    		\node[below right of=ee, node distance=32pt] (eee) {$7$};
 		    \draw (ee) -- (eee);
 		    \node[below left of=ee, node distance=32pt] (eee) {$5$};
 		    \draw (eee) -- (ee);
 		    \node[below left of=b, node distance=34pt] (c) {$1$};
 		    \draw (b) -- (c);
 		    \node[below right of=b, node distance=34pt] (d) {$3$};
 		    \draw (b) -- (d);
	\end{tikzpicture}
	\qquad
	&
	\qquad
    \begin{tikzpicture}[node distance=32pt]
            \node (a) {$\square$};
            \node[below left of=a, node distance=60pt] (b) {$\square$};
            \draw[red] (a) -- (b);
            \node[below right of=a, node distance=60pt] (bb) {$\square$};
            \draw[red] (a) -- (bb);
            \draw[->,>=latex, black,dotted] (bb) to[out=-110,in=40] (b);
            
            \node[below left of=b, node distance=40pt] (c) {$\square$};
            \draw[red] (b) -- (c);
            \draw[->,>=latex, black, dotted] (bb) to[out=-70,in=44] (c);		   
            \draw[->,>=latex, black, dotted] (b) to[out=-60,in=20] (c);
            
            
            \node[below right of=b, node distance = 22pt] (ee) {};
    \end{tikzpicture}
   	\end{tabular}	
    \caption{\label{fig:BST}
		(left) A \textsc{bst} built e.g. on $(4, 8, 6, 2, 9, 1, 3, 7, 5)$; (right) The compacted tree structure
		associated to the \textsc{bst}}
\end{figure}
In order to compress the tree structure, first the node labels must be removed, as presented
before. Thus by using a compaction through a postorder traversal of the tree, the example becomes
the tree structure presented in the right-hand side Figure~\ref{fig:BST}.
By adding the values stored in the original \textsc{bst} we get the tree of Figure~\ref{fig:BST2}.
\begin{figure}[h]
    \begin{tikzpicture}[node distance=32pt]
        \node (a) {$4$};
		\node[above left of=a, node distance=10pt, scale=0.5] (ta) {\textcircled{\raisebox{-1pt}{\textcolor{blue}{9}}}};
        \node[below left of=a, node distance=60pt] (b) {$2$};
		\node[above left of=b, node distance=10pt, scale=0.5] (tb) {\textcircled{\raisebox{-1pt}{\textcolor{blue}{3}}}};
        \draw[red] (a) -- (b);
        \node[below right of=a, node distance=60pt] (bb) {$8$};
		\node[above right of=bb, node distance=10pt, scale=0.5] (tbb) {\textcircled{\raisebox{-1pt}{\textcolor{blue}{5}}}};
        \draw[red] (a) -- (bb);
        \draw[->,>=latex, black, dotted] (bb) to[out=-110,in=40] node [near start, fill=white, scale=0.6]{$[6,5,7]$} (b);
        
        \node[below left of=b, node distance=40pt] (c) {$1$};
		\node[above left of=c, node distance=10pt, scale=0.5] (tc) {\textcircled{\raisebox{-1pt}{\textcolor{blue}{1}}}};
        \draw[red] (b) -- (c);
        \draw[->,>=latex, black, dotted] (bb) to[out=-70,in=44] node [very near start, fill=white, scale=0.6]{$[9]$} (c);		   
        \draw[->,>=latex, black, dotted] (b) to[out=-60,in=20] node [midway, fill=white, scale=0.6]{$[3]$} (c);
    \end{tikzpicture}
    \caption{\label{fig:BST2}
		Labeled compacted structure associated to the original \textsc{bst}}
\end{figure}
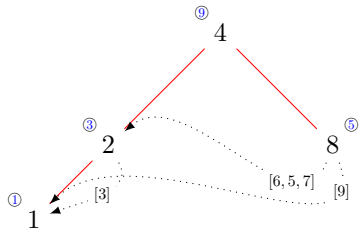
When a substructure has been removed through the compaction process, then in addition to the red pointer,
the list of the labels, obtained through a \emph{preorder traversal} of the substructure is stored. The latter, associated
to the size of the substructures, depicted with the circled blue values,
allows efficient searching. Let us present an example. We would like to know if $7$ is stored in
the structure. $7$ is larger than $4$, thus from the root we take the right edge to reach $8$. 
The value we are looking for is smaller than $8$. We take the left black pointer, and take also in consideration
the list $L:= [6, 5, 7]$. We define an index $i=0$
corresponding to the actual index in the list we are interested in. Using the pointer, we reach $2$ that 
corresponds in fact to $L[0]=6$. Since $7$ is larger than $6$, we must follow the right child of $2$, 
thus the new index is $i := i+2$ (the list stores the values obtained through the preorder traversal),
the constant $2$ is the size of the left subtree attached to $2$
plus $1$ for the node labeled by $2$. Now $L[2]=7$, we have reached the value we were interested in.

\begin{prop}
	In the compacted \textsc{bst} containing $n$ values, the search complexity is the same as in the \textsc{bst} 
	with respect to the number of value comparisons. There may be an extra-cost corresponding to the number of additions (related to the index)
	to traverse a list. The number of additions is at most equal to the number of comparisons to search for the value.
\end{prop}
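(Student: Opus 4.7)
The plan is to show by induction on the search depth that, at every step, the compacted \textsc{bst} search makes exactly the same value comparison as the classical \textsc{bst} search on the same query, and then to bound the number of auxiliary additions by the number of comparisons.

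First, I would formalise the invariant describing a \emph{virtual position} in the compacted structure. During a search, this position is either a node $v$ of the compacted tree, or a pair $(L,i)$ where $L$ is the preorder list labelling a compacted-away subtree and $0 \le i < |L|$ is the current index inside that list. In the first case the value under inspection is the label stored at $v$; in the second it is $L[i]$. The invariant states that the virtual position corresponds bijectively to a node $u$ of the original \textsc{bst}, that the values match, and that the fringe subtree rooted at the virtual position (as reconstructed via red pointers and lists) agrees with the fringe subtree rooted at $u$ in the original \textsc{bst}.

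Next, I would verify that the invariant is preserved by one search step. When we compare the query with the current value and branch, one of three things happens: (i) we are at a node of the compacted tree and follow a genuine tree edge, landing on another compacted node; (ii) we are at a node of the compacted tree and follow a red pointer labelled by some list $L$, so we now move to the virtual position $(L,0)$; (iii) we are already inside a list $(L,i)$ and must move to a child, which in the preorder encoding is $(L,i+1)$ for the left child and $(L,i+1+s)$ for the right child, where $s$ is the size of the left subtree stored at position $i$ (the circled blue number in Figure~\ref{fig:BST2}). In each case, the new virtual position corresponds precisely to the child chosen in the original \textsc{bst}; case (iii) uses the classical preorder indexing formula, which is exactly what motivated storing the subtree sizes.

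From the invariant, the sequence of comparisons performed by the compacted search coincides with the sequence performed by the classical \textsc{bst} search on the same query, so the two comparison counts are equal. For the additions, note that only case (iii) requires arithmetic on the index, and it requires exactly one addition ($i+1$ or $i+1+s$). Cases (i) and (ii) involve only pointer dereferences (case (ii) sets $i := 0$ without an addition). Since each downward step of the search incurs at most one addition, and the number of downward steps is at most the number of comparisons, the total number of additions is bounded above by the number of comparisons, as claimed. The main obstacle is the formal bookkeeping in the invariant: one has to check that entering a compacted-away subtree via its red pointer really does land on the preorder list describing the correct subtree of the original \textsc{bst}, and that the stored sizes allow the right-child index to be computed by a single addition; once this is in place, both the equality of comparison counts and the bound on additions follow immediately.
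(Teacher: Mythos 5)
Your proposal is correct and follows the same underlying idea as the paper's proof, which is considerably terser: the paper simply observes that the compacted structure shares the identical unlabelled tree shape with the original \textsc{bst} (so comparisons are preserved one-for-one) and that each step inside a preorder list costs one index addition, which is immediately bounded by the comparison count. You fill in exactly the bookkeeping the paper waves at -- the ``virtual position'' invariant, the three transition cases, and the check that the preorder-index update $(L,i)\mapsto(L,i+1)$ or $(L,i)\mapsto(L,i+1+s)$ lands on the correct child. That formalisation is a genuine improvement in rigour, but it is not a different argument; it is the same argument with the implicit induction made explicit.

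One small point worth noting: the bound is ``at most'' rather than ``equal'' because, as you correctly observe, steps taken within the genuine compacted tree (cases (i) and (ii)) perform a comparison with no accompanying addition; only steps inside a list (case (iii)) cost an addition. The paper's phrasing ``for each comparison there is one addition'' slightly overstates this, applying only to the in-list portion of the search, so your more careful case analysis actually justifies the Proposition's ``at most'' more cleanly than the paper's own one-line argument does.
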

\begin{proof}
	The number of value comparisons is exactly the same in the compacted structure as in the original \textsc{bst}.
	In fact, we just share the identical unlabeled tree structure, thus the number of comparisons does not change.
	For the same reason, if we must search inside a list associated to a black pointer, then, for each comparison
	there is one addition to shift inside the list.
\end{proof}

In the following Figure~\ref{fig:complex} we have represented two experiments through our python prototype.
On the left-hand side we are interested in the compaction ratio between the compressed data structure and the original \textsc{bst}.
Here we are interested in the whole size needed in memory. In particular the size of the integer values is counted
but further the data structure size itself is important. It is this latter that is in fact compressed:
in the \textsc{bst} many pointers are needed to reach the nodes of the tree. Many pointers and nodes
are replaced in the compressed data by lists of integers that need much less memory in practice.
In the figure, in the abscissa we represent the number of integers stored in the data structures; and in the ordinate, we compute
the ratio between the size in memory of the compressed data structure in front of the size of its corresponding \textsc{bst}.
Each dot corresponds to one sample, and the green curve is the average value among all samples.
The experiments are starting with 250 integer values up to 20,000 with steps every 250 values, and for each size we have used 
30 uniformly sampled \textsc{bst}s. We observe that even for small \textsc{bst}s, the compression ratio is very interesting, smaller than 0.5.
Further we remark that the green curve looks like the theoretical result: it is very close to a function $x \mapsto \alpha / \ln x$
for a given $\alpha$.

\begin{figure}[h]
	\begin{tabular}{c c}
		\includegraphics[scale=0.44]{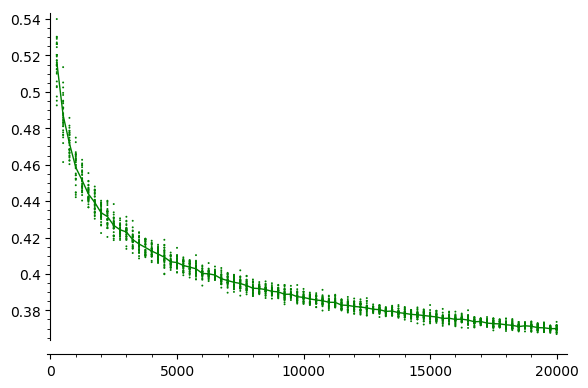}
	\qquad
	&
	\qquad
    	\includegraphics[scale=0.44]{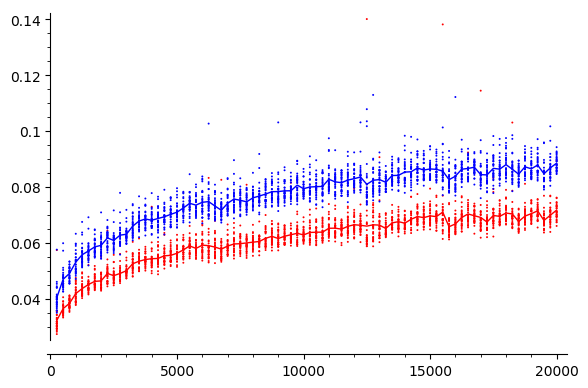}
   	\end{tabular}	
    \caption{\label{fig:complex}
		(left) Experimental compression ratio; (right) Experimental search time comparison}
\end{figure}

On the right-hand side of Figure~\ref{fig:complex}, for the same set of \textsc{bst}s and associated compacted structures,
we search for 1,000 randomly sampled values
present in the two structures. Each red dot is the average time, in milliseconds, (among the 1,000 searches)
for finding the value inside the \textsc{bst}, and the blue point is the analogous time for the search in the compressed structure.
For both complexity measures (number of comparisons or of arithmetic additions) the
average complexity stays of the same order $O(\ln n)$ as for the original \textsc{bst},
as we see it in the figure.
By computing the ratio of the blue values and the red values, the mean seems oscillating around $1.25$
for the whole range of sampled structures.

Let us conclude this section with the following remark. The point of view we have chosen is to build first the \textsc{bst}
and then, once the insertion and deletion process is done, we convert the \textsc{bst} into a compressed data structure
that is used only for searching.
We could develop a prototype data structure that manages insertion in deletion but the efficiency would probably be much
less than the one of \textsc{bst}, because of the substructure recognition problem.

\section{Conclusion}
	\label{sec:concl}

We showed for two exemplary families of increasing trees that the size of the compacted 
tree is smaller than for simply generated trees. This was done for recursive trees and plane
increasing binary trees. Though the result for the latter family was already known (and even with
better lower bound), we presented a new proof here and showed that our approach might work for
more classes of increasing trees.  

More precisely, we proved that the compacted tree belonging to a random recursive or increasing
binary tree of size $n$ is on average of size $\Omega (\sqrt{n} )$ and $\mathcal{O}(n/\ln n)$.
Numerical simulations on recursive trees suggest that this upper bound is already sharp,
\emph{i.e.}, that the size of the compacted tree is $\Theta(n/\ln)$. For the binary case that
was already shown with other methods.

However, in order to prove this conjecture, one has to find the distribution of the weights
$w(t)$, which turns out to be a very challenging task, especially in case of non-plane trees due
to the appearance of automorphisms. However, we could formulate a simple to state condition under
which we can prove the sharpness of the lower bound.  Thus, obtaining the (maximum) number of
labelings of non-plane trees of a given size is still work in progress, with the aim to improve
the lower bounds such that we can show the $\Theta$-result.  Furthermore, we conjecture that on
average the compacted tree is of size $\Theta \left( \frac{n}{\ln n} \right)$ for all classes of
increasing trees.

We explain the choice of the two classes of increasing trees, that were investigated within this
paper. The reason to choose recursive trees and increasing binary trees was that for these two
classes our computer algebra system is able to solve the differential equation defining $S_t(z)$,
although in case of increasing binary trees the solution is already more complicated and involves
some Bessel functions. On the other hand, this makes it easier, as we could then deal with
explicit expansions.  However, in case of the third prominent class of increasing trees,
\textsc{port}s (plane oriented recursive trees), we did not get any explicit solution for the
analogous of $S_t(z)$; thus this case is still an open question.

As a final note, remember the way we have compacted the \textsc{bst}s in the last section. Using
a pointer to describe the erased fringe subtree and the list of the labels in a specific traversal
(labels that must be kept in the compacted tree), we are able to search in the compacted structure
efficiently. But more generally, the way we have compacted the tree can be used for all possible
tree structures.  In the original paper~\cite{FSS90} by Flajolet~\emph{et al.}, the authors
compact only identical fringe subtrees in simply generated trees.  We focus on the tree structure
and its compaction as well, but the probability model on the tree shapes is a different one,
induced by the labeling. Moreover, we use a different additional information management in order
to cope with labels and could there extend the compaction to labeled tree models. It is desirable
to study other natural labeled tree classes and the resulting compaction ratio.



%
%

\section*{Acknowledgments} 
The authors thank the anonymous referees for pointing out several references, but also for their
comments and suggested improvements. In particular, we express our gratitude to one of the
referees who pointed out a subtle error and several smaller ones. All these persistent remarks 
have greatly increased the quality of the paper. 

\bibliographystyle{plain}
\bibliography{sections/biblio}

\begin{thebibliography}{10}

\bibitem{BO99}
C.~Bender and S.~Orszag.
\newblock {\em Advanced Mathematical Methods for Scientists and Engineers:
  Asymptotic Methods and Perturbation Theory}, volume~1.
\newblock Springer, 1999.

\bibitem{BL18}
L.~Seelbach Benkner and M.~Lohrey.
\newblock Average case analysis of leaf-centric binary tree sources.
\newblock In {\em 43rd International Symposium on Mathematical Foundations of
  Computer Science, {MFCS}}, volume 117 of {\em LIPIcs}, pages 16:1--16:15,
  2018.

\bibitem{BW20}
L.~Seelbach Benkner and S.~G. Wagner.
\newblock On the collection of fringe subtrees in random binary trees.
\newblock In {\em Theoretical Informatics - 14th Latin American Symposium,
  (LATIN)}, volume 12118 of {\em Lecture Notes in Computer Science}, pages
  546--558. Springer, 2020.

\bibitem{BFS92}
F.~Bergeron, P.~Flajolet, and B.~Salvy.
\newblock Varieties of increasing trees.
\newblock In {\em C{AAP} '92 ({R}ennes, 1992)}, volume 581 of {\em Lecture
  Notes in Comput. Sci.}, pages 24--48. Springer, Berlin, 1992.

\bibitem{BG15}
O.~Bodini and A.~Genitrini.
\newblock {Cuts in increasing trees}.
\newblock In {\em {12th SIAM Meeting on Analytic Algorithmics and Combinatorics
  (ANALCO)}}, pages 66--77, {San Diego, USA}, January 2015.

\bibitem{BGP16}
O.~Bodini, A.~Genitrini, and F.~Peschanski.
\newblock {A Quantitative Study of Pure Parallel Processes}.
\newblock {\em Electronic Journal of Combinatorics}, 23(1):P1.11, 39 pages,
  (electronic), 2016.

\bibitem{BMLMN15}
M.~Bousquet-M{\'e}lou, M.~Lohrey, S.~Maneth, and E.~Noeth.
\newblock {XML} compression via directed acyclic graphs.
\newblock {\em Theory of Computing Systems}, 57(4):1322--1371, 2015.

\bibitem{BDMS08}
N.~Broutin, L.~Devroye, E.~McLeish, and M.~de~la Salle.
\newblock The height of increasing trees.
\newblock {\em Random Struct. Algorithms}, 32(4):494--518, 2008.

\bibitem{cisz17}
J.~Cicho{\'n}, A.~Magner, W.~Szpankowski, and K.~Turowski.
\newblock {\em On Symmetries of Non-Plane Trees in a Non-Uniform Model}, pages
  156--163.
\newblock 2017.

\bibitem{cover2006elements}
T.~M. Cover and J.~A. Thomas.
\newblock {\em Elements of {I}nformation {T}heory, 2nd edition}.
\newblock Wiley-Interscience, 2006.

\bibitem{Devroye98}
L.~Devroye.
\newblock On the richness of the collection of subtrees in random binary search
  trees.
\newblock {\em Inf. Process. Lett.}, 65(4):195--199, 1998.

\bibitem{drmota03}
M.~Drmota.
\newblock An analytic approach to the height of binary search trees {II}.
\newblock {\em J. ACM}, 50(3):333--374, 2003.

\bibitem{Drmota09}
M.~Drmota.
\newblock {\em Random {T}rees}.
\newblock Springer, Vienna-New York, 2009.

\bibitem{DIMR09}
M.~Drmota, A.~Iksanov, M.~Moehle, and U.~Roesler.
\newblock A limiting distribution for the number of cuts needed to isolate the
  root of a random recursive tree.
\newblock {\em Random {S}truct. {A}lgorithms}, 34(3):319--336, 2009.

\bibitem{Fill96}
J.~Fill.
\newblock On the distribution of binary search trees under the random
  permutation model.
\newblock {\em Random Struct. Algorithms}, 8:1--25, 1996.

\bibitem{FGM97}
P.~Flajolet, X.~Gourdon, and C.~Mart\'{\i}nez.
\newblock Patterns in random binary search trees.
\newblock {\em Random Structures Algorithms}, 11(3):223--244, 1997.

\bibitem{flajolet1990singularity}
P.~Flajolet and A.~Odlyzko.
\newblock Singularity analysis of generating functions.
\newblock {\em SIAM Journal on discrete mathematics}, 3(2):216--240, 1990.

\bibitem{flajolet2009analytic}
P.~Flajolet and R.~Sedgewick.
\newblock {\em Analytic Combinatorics}.
\newblock Cambridge University Press, 2009.

\bibitem{FSS90}
P.~Flajolet, P.~Sipala, and J.-M. Steyaert.
\newblock Analytic variations on the common subexpression problem.
\newblock In {\em Automata, languages and programming ({C}oventry, 1990)},
  volume 443 of {\em Lecture Notes in Comput. Sci.}, pages 220--234. Springer,
  New York, 1990.

\bibitem{golkebiewski2018entropy}
Z.~Go{\l}{\k{e}}biewski, A.~Magner, and W.~Szpankowski.
\newblock Entropy and optimal compression of some general plane trees.
\newblock {\em ACM Transactions on Algorithms (TALG)}, 15(1):1--23, 2018.

\bibitem{GWW16}
M.~Gopaladesikan, S.~Wagner, and M.~D. Ward.
\newblock On the asymptotic probability of forbidden motifs on the fringe of
  recursive trees.
\newblock {\em Experimental Mathematics}, 25(3):237--245, 2016.

\bibitem{GKP94}
R.~L. Graham, D.~E. Knuth, and O.~Patashnik.
\newblock {\em Concrete Mathematics: A Foundation for Computer Science}.
\newblock Addison-Wesley Longman Publishing Co., Inc., 2nd edition, 1994.

\bibitem{Ince44}
E.~L. Ince.
\newblock {\em Ordinary {D}ifferential {E}quations}.
\newblock Dover Publications, New York, 1944.

\bibitem{Knuth98}
D.~E. Knuth.
\newblock {\em The {A}rt of {C}omputer {P}rogramming, volume 3: (2nd ed.)
  {S}orting and {S}earching}.
\newblock Addison Wesley Longman Publishing Co., Inc., Redwood City, CA, USA,
  1998.

\bibitem{KP07}
M.~Kuba and A.~Panholzer.
\newblock On the degree distribution of the nodes in increasing trees.
\newblock {\em J. Comb. Theory, Ser. A}, 114(4):597--618, 2007.

\bibitem{matu2018}
A.~Magner, K.~Turowski, and W.~Szpankowski.
\newblock Lossless compression of binary trees with correlated vertex names.
\newblock {\em IEEE Transactions on Information Theory}, 64(9):6070--6080,
  2018.

\bibitem{MS95}
H.~M. Mahmoud and R.~T. Smythe.
\newblock A {S}urvey of {R}ecursive {T}rees.
\newblock {\em Theo. {P}robability and {M}athematical {S}tatistics}, 51:1--37,
  1995.

\bibitem{MM78}
A.~Meir and J.~W. Moon.
\newblock On the altitude of nodes in random trees.
\newblock {\em Canadian Journal of Mathematics}, 30(5):997--1015, 1978.

\bibitem{Miller06}
P.~D. Miller.
\newblock {\em Applied Asymptotic Analysis}.
\newblock Graduate studies in mathematics. American Mathematical Society, 2006.

\bibitem{Moon74}
J.~Moon.
\newblock The distance between nodes in recursive trees.
\newblock In {\em London {M}ath. {S}oc. {L}ecture {N}ote {S}er.}, volume~13,
  pages 125--132, 1974.

\bibitem{PP07}
A.~Panholzer and H.~Prodinger.
\newblock Level of nodes in increasing trees revisited.
\newblock {\em Random {S}truct. {A}lgorithms}, 31(2):203--226, 2007.

\bibitem{RW15}
D.~Ralaivaosaona and S.~Wagner.
\newblock Repeated fringe subtrees in random rooted trees.
\newblock In {\em 2015 {P}roceedings of the {T}welfth {W}orkshop on {A}nalytic
  {A}lgorithmics and {C}ombinatorics ({ANALCO})}, pages 78--88. SIAM,
  Philadelphia, PA, 2015.

\bibitem{zhang2013redundancy}
J.~Zhang, E.-H. Yang, and J.~C. Kieffer.
\newblock Redundancy analysis in lossless compression of a binary tree via its
  minimal dag representation.
\newblock In {\em 2013 IEEE International Symposium on Information Theory},
  pages 1914--1918. IEEE, 2013.

\end{thebibliography}

\end{document}